\DeclareMathOperator*{\argmin}{argmin} 
\newtheorem{theorem}{Theorem}[section]
\newtheorem{lemma}[theorem]{Lemma}
\theoremstyle{definition}
\theoremstyle{remark}
\newtheorem{remark}[theorem]{Remark}
\numberwithin{equation}{section}
\begin{document}

\title[Backward Fractional Diffusion]
 {Backward Problem for a Time-Space Fractional Diffusion Equation}

\author[J.X.Jia]{Junxiong Jia}
\address{Department of Mathematics and Statistics,
Xi'an Jiaotong University,
 Xi'an
710049, China;
Beijing Center for Mathematics and Information Interdisciplinary Sciences (BCMIIS)}
\email{jjx323@xjtu.edu.cn}
\thanks{}

\author[J. Peng]{Jigen Peng}
\address{Department of Mathematics and Statistics,
Xi'an Jiaotong University,
 Xi'an
710049, China;
Beijing Center for Mathematics and Information Interdisciplinary Sciences (BCMIIS)}
\email{jgpeng@xjtu.edu.cn}

\author[J. Gao]{Jinghuai Gao}
\address{School of Electronic and Information Engineering,
Xi'an Jiaotong University,
 Xi'an
710049, China;
Beijing Center for Mathematics and Information Interdisciplinary Sciences (BCMIIS)}
\email{jhgao@xjtu.edu.cn}

\author[Y. Li]{Yujiao Li}
\address{Department of Bioengineering,
Xi'an Jiaotong University,
 Xi'an
710049, China;}
\email{liyujiao.323@stu.xjtu.edu.cn}

\subjclass[2010]{}

\date{}

\keywords{Backward time-space fractional diffusion, Fractional operator semigroup, Bregmann iterative method,
Variable TV regularization}

\begin{abstract}
In this paper, a backward problem for a time-space fractional diffusion process has been considered.
For this problem, we propose to construct the initial function by minimizing data residual error in fourier space domain
with variable total variation (TV) regularizing term which can protect the edges as TV regularizing term and reduce
staircasing effect. The well-posedness of this optimization problem is obtained under a very general setting.
Actually, we rewrite the time-space fractional diffusion equation as an abstract fractional differential
equation and deduce our results by using fractional semigroup theory, hence, our theoretical results can be applied to
other backward problems for the differential equations with more general fractional operator.
Then a modified Bregman iterative algorithm has been proposed to approximate
the minimizer. The new features of this algorithm is that the regularizing term altered in each step and
we need not to solve the complex Euler-Lagrange equation of variable TV regularizing term (just need to solve
a simple Euler-Lagrange equation). The convergence of this algorithm and the strategy of choosing parameters
are also obtained. Numerical implementations are provided to support our theoretical analysis to show
the flexibility of our minimization model.
\end{abstract}

\maketitle


\section{Introduction}
Diffusion phenomenon is ubiquitous in our physical world.
From the point of view of probability theory, applying centeral limit theorem to the random walk problem we can
derive diffusion equations. If we assume the distribution of particle jump is Gaussian, we will obtain
normal diffusion equations
\begin{align}\label{1_normalDiffusion}
\begin{split}
\left\{ \begin{array}{l}
\partial_{t} v(t,x) - \Delta v(t,x) = 0  \\
v(x,0) = u(x).
\end{array} \right.
\end{split}
\end{align}
If we assume the particle jump satisfy L\'{e}vy distribution, by continuous time random walk (CTRW) model, we will derive
time-space fractional diffusion equation (FDE) as follows
\begin{align}\label{1_timespacefrac}
\begin{split}
\left\{ \begin{array}{ll}
\partial_{t}^{\alpha} v(t,x) + (-\Delta)^{\beta}v(t,x) = 0 & (x,t) \in \mathbb{R}^{2} \times (0,\infty) \\
v(x,0) = u(x) & x \in \mathbb{R}^{2}
\end{array} \right.
\end{split}
\end{align}
with $\alpha \in (0,1]$, $\beta \in (1/2,1]$.
Here the time derivative is in Djrbashian-Caputo sense defined as follows
\begin{align}\label{1_definitionCaputo}
\begin{split}
\partial_{t}^{\alpha}v(t) = (g_{1-\alpha} * \partial_{t}v)(t) := \int_{0}^{t} g_{1-\alpha}(t-s)v(s) ds
\end{split}
\end{align}
where
\begin{align}\label{1_definitionofG}
g_{\gamma}(t) :=  \left\{ \begin{array}{ll}
\frac{1}{\Gamma(\gamma)}t^{\gamma - 1}, & t > 0, \\
0, & t \leq 0,
\end{array} \right.
\end{align}
with $\Gamma(\gamma)$ is the Gamma function.
Denote the fourier transform of function $v$ as $\mathcal{F}(v)$ or $\hat{v}$, the inverse fourier
transform as $\mathcal{F}^{-1}v$ or $\check{v}$.
Then the space fractional derivative $(-\Delta)^{\beta}$ can be defined by fourier transform
as $(-\Delta)^{\beta}v = \mathcal{F}^{-1}(|\xi|^{\beta}\hat{v})$.
Usually, we call this type fractional derivative operator as symmetric Riesz-Feller space fractional derivative operator.

Fractional time-space diffusion equation (\ref{1_timespacefrac}) attracts lots of researchers attention.
From the physical point of view, there are two long papers \cite{Metzler2000,Zaslavsky2002} provide a good summary.
From the stochastic point of view, there is a good book \cite{Meerschaert2011} which gives rigorous mathematical
deductions. From the functional analysis point of view, Peng, Li \cite{Peng2012} propose fractional semigroup, Li, Chen \cite{LiChen2010}
propose $\alpha$-resolvent operator to provide a general theory for the fractional abstract Cauchy problem which can be
applied to FDE (\ref{1_timespacefrac}) and some more general FDEs. B. Baeumer et al. \cite{bea0,bea1,bea2,bea3} propose the concept of stochastic
solutions for fractional evolution equations and study FDEs by using stochastic methods combined with operator semigroup theory.

In this paper, we focus on the backward problem for equation (\ref{1_timespacefrac}).
As mentioned in a recent tutorial \cite{RundellJin2015}, the mathematical theory of inverse problems for FDEs
is still in its infancy. However, there are already some pioneering work in this direction.
Cheng et al. \cite{Cheng2009} establish the uniqueness in an inverse problem for a one-dimensional fractional
diffusion equation.
Sakamoto and Yamamoto \cite{fracJMAA2011} establish the unique existence of weak solutions and the asymptotic
behavior as time $t$ goes to $\infty$; they also prove the stability in the backward problem in time and the
uniqueness in determining an initial value.
Liu and Yamamoto \cite{Liu2010} study a backward  problem for a time-fractional diffusion equation.
Zhang and Xu \cite{ZhangXu2011} investigate an inverse source problem for fractional diffusion equation, they obtain the
uniqueness of the inverse problem by analytic continuation and Laplace transform.
Zheng and Wei \cite{ZhengWei2010} study backward problem of space fractional diffusion equations, they
show that the problem is severely ill-posed and propose a regularization method.

Recently, Wang and Liu \cite{WangLiu2013} propose to use more general anomalous diffusion models to
describe the blurring effect, and the backward problems give the mathematical formulation for the de-blurring process
in image restoration. By using total variation regularization term, the discontinuity of the initial data
can be recovered. In Wang and Liu's paper, they use time-fractional diffusion models, here we intend to use a more
general time-space fractional diffusion model (\ref{1_timespacefrac}).
In this paper, we assume the initial data $u(x) \in \mathbb{R}^{2}$ with compact support in a bounded convex open
subset $\Omega$ of the plane with Lipschitz continuous boundary $\partial\Omega$,
which is a reasonable assumption in many applications.

Denote $g^{\delta}(x)$ in $\Omega$ to be the measurement data, our backward problem is to approximate $v(0,x) = u(x)$ from $g^{\delta}(x)$.
For some known error level $\delta > 0$, the noisy data of the exact gray level $g(x) := v(T,x)$ satisfying
\begin{align}\label{1_noiseLevel}
\|g^{\delta}(\cdot) - g(\cdot) \|_{L^{2}(\Omega)} \leq \delta.
\end{align}

In many applications of the backward diffusion problem, the initial distribution $u(x)$ is in general not smooth.
Because $v(T,x)$ generated from the Cauchy problem (\ref{1_timespacefrac}),
$v(t,x)$ need not have compact support as $u(x)$. Here, we only use the measurement data $v(T,x)$ in $\Omega$,
i.e., the values $v(T,x)$ outside of $\Omega$ have nothing to do with our reconstruction process, we can define
$v(T,x)$ for $x \in \Omega$ such that
\begin{align}\label{1_dataoutOmega}
\|g^{\delta}(\cdot) - g(\cdot)\|_{L^{2}(\mathbb{R}^{2}\backslash\Omega)} = 0.
\end{align}

Taking Fourier transform with respect to $x$ in (\ref{1_timespacefrac}), we obtain
\begin{align}\label{1_fourierfractional}
\partial_{t}^{\alpha}\hat{v}(t,\xi) = - |\xi|^{\beta}\hat{v}(t,\xi).
\end{align}
By using the Laplace transform with respect to $t$ in (\ref{1_fourierfractional}), we can establish the relation
between $u(x)$ and $v(T,x)$ in frequency domain as
\begin{align}\label{1_fourierSolution}
\hat{g}(\xi) = \hat{v}(T,\xi) = \hat{S}(\xi) \hat{u}(\xi), \quad \hat{S}(\xi) := E_{\alpha,1}(-|\xi|^{\beta}T^{\alpha}),
\end{align}
where $E_{\alpha,1}(\cdot)$ is the Mittag-Leffer function defined as
\begin{align}\label{1_defintionMittagLeffler}
E_{\alpha,\gamma}(z) := \sum_{k = 0}^{\infty} \frac{z^{k}}{\Gamma(\alpha k + \gamma)}, \quad z \in \mathbb{C}, \quad
\alpha > 0, \quad \gamma > 0,
\end{align}
which can be seen as a generalization of exponential function $e^{z}$.
Denote $Su = \mathcal{F}^{-1}(\hat{S}(\xi) \hat{u}(\xi))$, then we have
\begin{align}\label{1_timespacedomain}
g(x) = v(T,x) = (Su)(x).
\end{align}
Intuitively, the operator $S$ is a convolution operator with kernel $\mathcal{F}^{-1}(\hat{S}(\xi))$.
In part 3 of section 2, we will define the operator $S$ (formula (\ref{2_3defofS}))
by the solution operator of an abstract fractional evolution equation.

As mentioned by the previous works \cite{RundellJin2015,WangLiu2013}, recovering $u(x)$ from the noisy measurement
of exact $v(T,x)$ base on relation (\ref{1_fourierSolution}) in the frequency domain is ill posed due to the rapid
decay of the forward process. Usually, there are two conventional methods, namely, Tikhonov regularization and
truncated Fourier transform regularization, to overcome this difficulty in the frequency domain.
In 2013, Wang and Liu \cite{WangLiu2013} proposed to use total variation (TV) regularization for time fractional
diffusion model.

TV regularization method successfully recover the edges of the initial data and is robust for the noise.
However, TV regularization method suffers from staircasing effect, which is a noise induced introduction
of artificial steps or discontinuities into the reconstructed or denoised noise.
In order to reduce this effect, Blomgren et al. \cite{Blomgren2000} suggest letting the exponent
in the regularization term depend on the data.
Li et al. \cite{LiLiPi2010} studies the variable exponent TV regularization when exponent $1 < p(x) \leq 2$.
Harjulehto et al. \cite{Harjulehto2013} studies the variable TV regularization allowing $p(x) = 1$ for some $x$ by
using techniques development in \cite{Harjulehto2008}.
Bollt et al. \cite{var2009} studies the following variation model
\begin{align}\label{1_variationImage}
\min_{u}J(u) = \int_{\Omega}|\nabla u|^{\tilde{p}(x)} dx + \frac{\lambda}{2}\int_{\Omega}|u-g|^{2} dx,
\end{align}
where $\lambda > 0$ is a positive constant, $g$ is the noisy image, $\tilde{p}(\cdot)$ defined as
\begin{align}\label{1defp}
\tilde{p}(x) = p(u) = P_{M}(|\nabla(G_{\tilde{\delta}} * u)(x)|^2)
\end{align}
with $G_{\tilde{\delta}} : \mathbb{R}^{2} \rightarrow \mathbb{R}$ is a symmetric mollifier centered at $0$
and belongs to $C^{2} \cap W^{3,2}$.
$P_{M} : \mathbb{R}^{+} \rightarrow [1,2]$ is a non-increasing $C^{2}$ function with $P_{M}(M) = 1$ with $M > 0$ is a
positive real number. For example, $P_{M}$ can be taken as follows:
\begin{align}\label{1PM}
\begin{split}
P_{M}(s) = \left\{ \begin{array}{ll}
2 - \frac{10s^3}{M^3} + \frac{15s^4}{M^4} - \frac{6s^5}{M^5} & \text{if }s \leq M\\
1 & \text{if }s > M
\end{array} \right. .
\end{split}
\end{align}
They studies how parameter choices affect recover results and prove the existence and uniqueness of minimizers.
Recently, in \cite{wuchi}, the author studies image decomposition problems
by using variable total regularization combined with variable Besov space.

We attempt to use the variable total variation regularization term to penalize our fractional backward diffusion problem.
More specifically, we intend to use the following model
\begin{align}\label{1Question}
u^{\delta} = \argmin_{u \in \mathcal{K}} \int_{\Omega}|\nabla u|^{\tilde{p}(x)} dx \,\, \text{such that} \,\,
\|S u - g^{\delta}\|^{2}_{L^{2}(\mathbb{R}^{2})} \leq \delta,
\end{align}
where $\delta > 0$ is error level, $\mathcal{K}$ is some suitable admissible set of the approximate solution,
$\tilde{p}(x)$ defined as in (\ref{1defp}).

By applying the Lagrangin formulation, the variable TV restoration model (\ref{1Question}) can be transformed
into the following unconstrained minimization problem:
\begin{align}\label{1_unstranation}
u^{\delta} = \argmin_{u \in \mathcal{K}} \int_{\Omega}|\nabla u|^{\tilde{p}(x)} dx +
\frac{\lambda}{2}\|S u - g^{\delta}\|^{2}_{L^{2}(\mathbb{R}^{2})},
\end{align}
where $\lambda$ is a positive parameter that controls the tradeoff between a good fit to the measurement data and the regularized
solution. For each $\delta > 0$, there exists some $\lambda$ such that (\ref{1Question}) and (\ref{1_unstranation}) are
equivalent.

Comparing (\ref{1_variationImage}) and our model (\ref{1_unstranation}), the forward operator is more complex
than the identity operator.
In order to solve (\ref{1_variationImage}), we can simply take
\begin{align}\label{1_u0}
\tilde{p}(x) = p(g) = P_{M}(|\nabla(G_{\tilde{\delta}} * g)(x)|^2)
\end{align}
where $g$ is the noisy image, and the exponent will not change during our computation,
by doing this the Euler-Lagrange equation will be simpler than
\begin{align}\label{1_u0bian}
\tilde{p}(x) = p(u) = P_{M}(|\nabla(G_{\tilde{\delta}} * u)(x)|^2).
\end{align}
For clarity, we list the Euler-Lagrange equations for (\ref{1_unstranation}) with (\ref{1_u0}) as follows
\begin{align}\label{1Euler1}
\begin{split}
0 = -\nabla \cdot \left( \frac{\nabla u}{|\nabla u|}P_{M}(|\nabla (G_{\tilde{\delta}} * g)|^{2})
|\nabla u|^{P_{M}(|\nabla (G_{\tilde{\delta}} * g)|^{2})-1} \right) + \lambda S^{*}(Su - g^{\delta}),
\end{split}
\end{align}
and for (\ref{1_unstranation}) with (\ref{1_u0bian}) as follows
\begin{align}\label{1Euler2}
\begin{split}
0 = & - G_{\tilde{\delta}} * \nabla \cdot \left( |\nabla u|^{P_{M}(|\nabla (G_{\tilde{\delta}} * u)|^2)}
P'_{M}(|\nabla (G_{\tilde{\delta}} * u)|^2) \cdot 2\nabla (G_{\tilde{\delta}} * u) \right)  \\
& - \nabla \cdot \left( |\nabla u|^{P_{M}(|\nabla (G_{\tilde{\delta}} * u)|^2)} \frac{\nabla u}{|\nabla u|} \right)
+ \lambda S^{*}(Su - g^{\delta}).
\end{split}
\end{align}

For the model (\ref{1_variationImage}), because the edges will not change so much during the computation,
the reduction (\ref{1_u0}) which is taken in \cite{var2009} is suitable.
However, for our model (\ref{1_unstranation}), because the edges will change dramatically during the evolution process,
we must iterate the value of $\tilde{p}(x)$ during our computation.
To make this clear, we consider an image as the initial data and the solution $v(1,x)$ of the fractional diffusion equation
(\ref{1_timespacefrac}) shown in figure \ref{4_2original}.
\begin{figure}[htbp]
\centering
\includegraphics[width=0.9\textwidth]{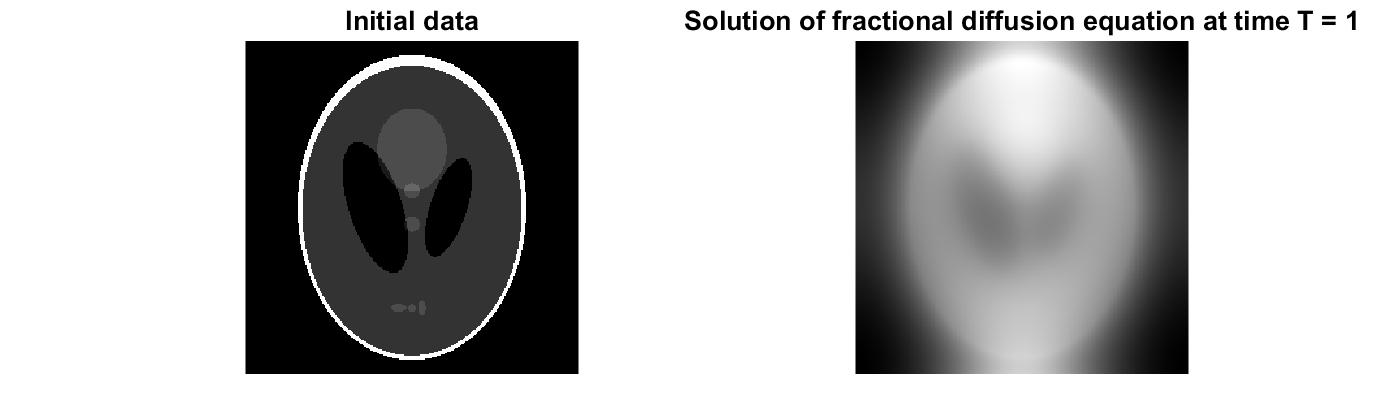}
\caption{Left:~Initial data;
Right:~The solution of the fractional diffusion equation (\ref{1_timespacefrac}) at time $T = 1$ with $\alpha = 0.6$, $\beta = 0.9$.}\label{4_2original}
\end{figure}
The solution $v(1,x)$ shown on the right hand side of figure \ref{4_2original} is calculated by Fourier transform and formula (\ref{1_fourierSolution})
with $\alpha = 0.6$, $\beta = 0.9$ and $T = 1$.
The left image in figure \ref{1_boundary} is the boundary of the initial data detected by Canny algorithm in the Matlab toolbox.
The right image in figure \ref{1_boundary} is the boundary of $v(1,x)$ also detected by Canny algorithm.
From these figures, it is clear that the boundary of the initial data will change dramatically during the fractional
evolution process as claimed in the beginning of this paragraph.
\begin{figure}[htbp]
\centering
\includegraphics[width=0.9\textwidth]{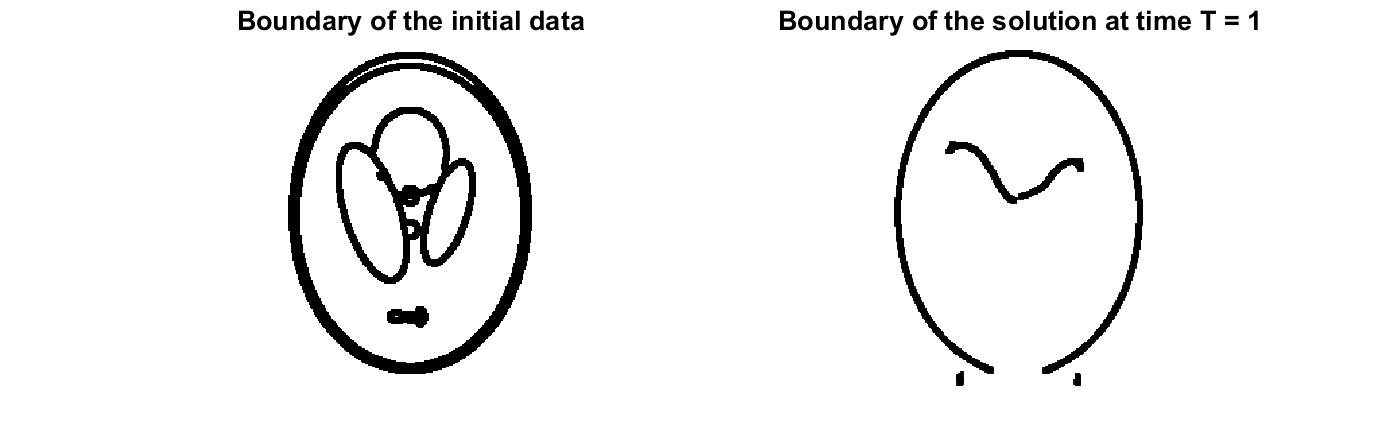}
\caption{Left:~Boundaries of the initial data;
Right:~Boundaries of the solution of the fractional diffusion equation (\ref{1_timespacefrac})
at time $T = 1$ with $\alpha = 0.6$, $\beta = 0.9$.}\label{1_boundary}
\end{figure}

In summary, theories about existence, uniqueness and stability will be proved in a
very general setting, then restricted to backward problem for equation (\ref{1_timespacefrac})
we propose a modified Bregman iterative algorithm to solve problem (\ref{1_unstranation}).
In the following, we will describe the key point of our proof.
In order to prove existence, uniqueness and stability of problem (\ref{1_unstranation}), we first generalize
the theory constructed in \cite{BV1994} to our variable total variation regularization model, during the proof we propose
a concept named CBV-coercive. After building the general theory, we need to verify the operator $S$ appeared in (\ref{1_timespacedomain})
satisfy the conditions in the general theory. One of the new ingredients of this paper is that
we propose an abstract fractional evolution equation (\ref{3_timespacefracAbstract}), then prove the
solution operator of this abstract evolution equation satisfy the required conditions.
Through the abstract formulation, we obtain the existence, uniqueness and stability when
the forward problem is (\ref{1_timespacefrac}) and in addition, all the theoretical results can be applied to
more general systems.
More specifically, our results is valid for system (\ref{3_timespacefracAbstract}) in section 2
with the spatial derivative operator $-A$ generate a $C_{0}$-semigroup and satisfy condition (\ref{2_3judgecondition}).

The second new ingredients of this paper is that we propose a modified Bregman iterative
algorithm to solve problem (\ref{1_unstranation}). To the best of our knowledge,
researchers use Euler-Lagrange equations directly or construct an evolution process based on Euler-Lagrange
equations to solve image restoration problems with variable TV regularizing term.
In the traditional image restoration problem, the edges will not change dramatically for the forward
operator is the identity operator, hence, we can assume (\ref{1_u0}) which highly reduce the computational task.
In our setting, the edges will change during the diffusion process as shown in figure \ref{1_boundary},
so we must iterate the exponent $\tilde{p}$
in our algorithm. Bregman iteration \cite{OsherBurger2005} is an efficient methods used to solve TV regularization based image restoration.
In the framework of Bregman iteration methods, we obtain $\tilde{p}_{n+1}$ by using the value of recovered
image $u_{n}$, so during every iteration we can use the Euler-Lagrange equations as in the (\ref{1_u0}) case.
Hence, on one hand we allow the exponent $\tilde{p}$ update during each iteration.
On the other hand, a simple Euler-Lagrange equation can be used to reduce the computational load.
However, in our modified Bregman iterative algorithm, the regularizing term changed its form at each iteration,
so we need more techniques to provide theoretical analysis of our algorithm. In section 3, we prove the convergence and
provide a practical stopping criterion based on the detailed analysis.

The organization of this paper is as follows. In section 2, we propose the concept of CBV-coercive and
build a general theory then use the general theory to a general linear model with variable TV regularizing term.
By using operator semigroup and fractional operator semigroup theory, we prove the backward problems for an abstract fractional
differential equation satisfying the conditions in our general theory.
In section 3, we propose modified Bregman iterative algorithm, then provide detailed theoretical analysis.
Finally, the numerical implementations are given in section 4 to support our theoretical results and
to show the validity of the proposed algorithm.

\section{Existence, Uniqueness and Stability}
In this section, we will prove existence, uniqueness and stability of our minimization problem (\ref{1_unstranation})
in a general setting.
Here we need to clarify some notations used through all the following parts of this paper.
\begin{itemize}
  \item $d$ stands for dimension; $\Omega \subset \mathbb{R}^{d}$ is a bounded domain with Lipschitz boundary;
  \item $C^{m}$ will stands for functions with continuous derivatives up to order $m$; $C_{c}^{m}$ stands for
  compactly supported function with continuous derivatives up to order $m$;
  \item $W^{m,p}$ is the usual Sobolev space with weak derivatives of order up to $m$ belongs to $L^{p}$;
  For simplicity, we denote $H^{m} := W^{m,p}$ when $p = 2$; $H_{0}^{m}$ will stands for the closure of $C_{c}^{\infty}$
  in $H^{m}$;
  \item For a subset $\Omega \in \mathbb{R}^{d}$, $\chi_{\Omega}$ stands for indicator function which equal to $1$ in $\Omega$
  and equal to $0$ outside of $\Omega$;
  \item $|\Omega|$ stands for the Lebesgue measure of $\Omega \subset \mathbb{R}^{d}$;
  \item If $S$ is a bounded linear operator, we will denote $\|S\|$ as the operator norm of $S$;
  \item BV in this paper stands for functions of bounded variation, the norm defined as
  \begin{align*}
  \|u\|_{BV} := \|u\|_{L^{1}(\Omega)} + \|u\|_{\dot{BV}}
  \end{align*}
  where $\|u\|_{\dot{BV}} := \sup_{\sigma \in V} \int_{\Omega} (-u \mathrm{div}\sigma) dx$
  with $V := \{ \sigma \in C_{c}^{1}(\Omega;\mathbb{R}^{d}) : |\sigma(x)| \leq 1 \text{ for all } x \in \Omega \}$.
\end{itemize}

\subsection{General Theory}
In this subsection, we build a general theory for the following unconstrained minimization problem
\begin{align}\label{2main_problem}
\min_{u \in L^{q}(\Omega)} T(u).
\end{align}

In order to use compactness properties of function spaces for unconstrained minimization problems,
we introduce the following property: define $T$ to be CBV-coercive if
\begin{align}\label{2condition}
T(u) \rightarrow +\infty \quad \text{whenever} \quad J(u) \rightarrow +\infty,
\end{align}
where $J(\cdot)$ satisfies $\|u\|_{BV} \leq C J(u)$.
\begin{theorem}\label{2well-posedness}
Suppose $J$ is defined as in (\ref{2condition}) and $T$ is CBV-coercive. If $1 \leq q < \frac{d}{d-1}$ and $T$ is lower
semi-continuous, then problem (\ref{2main_problem}) has a solution. If in addition $q = \frac{d}{d-1}$, dimension $d \geq 2$,
and $T$ is weakly lower semi-continuous, then a solution also exists.
In either case, the solution is unique if $T$ is strictly convex.
\end{theorem}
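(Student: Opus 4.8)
The plan is to apply the direct method of the calculus of variations, with the CBV-coercivity hypothesis supplying the compactness that ordinarily comes from norm-coercivity. First I would set $m := \inf_{u \in L^{q}(\Omega)} T(u)$ and pick a minimizing sequence $\{u_{n}\} \subset L^{q}(\Omega)$ with $T(u_{n}) \to m$ (the case $m = +\infty$ being degenerate). Since $T(u_{n})$ is bounded above, the CBV-coercivity condition (\ref{2condition}) forces $J(u_{n})$ to stay bounded: if some subsequence had $J(u_{n_{j}}) \to +\infty$, then $T(u_{n_{j}}) \to +\infty$, contradicting $T(u_{n_{j}}) \to m < +\infty$. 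Together with the estimate $\|u\|_{BV} \leq C J(u)$ this yields $\sup_{n} \|u_{n}\|_{BV} \leq C \sup_{n} J(u_{n}) < \infty$, so the minimizing sequence is bounded in $BV(\Omega)$.

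Next I would extract a convergent subsequence, treating the two ranges of $q$ separately. For $1 \leq q < \frac{d}{d-1}$ I would invoke the compact embedding $BV(\Omega) \hookrightarrow\hookrightarrow L^{q}(\Omega)$, valid because $\Omega$ is a bounded Lipschitz domain, to obtain a subsequence $u_{n_{k}} \to u^{\ast}$ strongly in $L^{q}(\Omega)$. At the critical exponent $q = \frac{d}{d-1}$ with $d \geq 2$ only the continuous embedding $BV(\Omega) \hookrightarrow L^{d/(d-1)}(\Omega)$ is available; however, since $\frac{d}{d-1} \in (1,2]$ the space $L^{d/(d-1)}(\Omega)$ is reflexive, so the bounded sequence admits a weakly convergent subsequence $u_{n_{k}} \rightharpoonup u^{\ast}$ in $L^{d/(d-1)}(\Omega)$. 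In both cases $u^{\ast} \in L^{q}(\Omega)$, hence $u^{\ast}$ is an admissible competitor.

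I would then close the argument with lower semicontinuity. In the subcritical case, strong $L^{q}$ convergence plus lower semicontinuity of $T$ gives $T(u^{\ast}) \leq \liminf_{k} T(u_{n_{k}}) = m$; in the critical case, weak $L^{q}$ convergence plus weak lower semicontinuity of $T$ gives the same inequality. Since $T(u^{\ast}) \geq m$ as well, we conclude $T(u^{\ast}) = m$, so $u^{\ast}$ is a minimizer. For uniqueness under strict convexity I would argue by contradiction: if $u_{1} \neq u_{2}$ were two minimizers, strict convexity would give $T\big(\tfrac{1}{2}(u_{1}+u_{2})\big) < \tfrac{1}{2}\big(T(u_{1})+T(u_{2})\big) = m$, contradicting minimality of $m$; hence the minimizer is unique.

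The main obstacle I anticipate is not the abstract scheme, which is standard, but the correct use of the $BV$ embedding theory: one must exploit that $\Omega$ is bounded and Lipschitz to get compactness of $BV \hookrightarrow L^{q}$ strictly below the critical exponent, and recognize that at $q = \frac{d}{d-1}$ compactness fails, so the stronger hypothesis of \emph{weak} lower semicontinuity is precisely what compensates for having only weak convergence — this is exactly why the theorem bifurcates at $\frac{d}{d-1}$ and requires $d \geq 2$ there. Confirming that the $BV$ bound passes to $u^{\ast}$ by lower semicontinuity of the $BV$ seminorm is a minor additional remark should one wish to record $u^{\ast} \in BV(\Omega)$.
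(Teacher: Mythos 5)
Your proposal is correct and follows essentially the same route as the paper: a minimizing sequence is $BV$-bounded by CBV-coercivity, a subsequence converges in $L^{q}$ (strongly below the critical exponent, weakly at $q=\frac{d}{d-1}$, which is exactly the content of the compactness result the paper cites from Acar--Vogel), and (weak) lower semicontinuity plus strict convexity finish the argument. No substantive differences to report.
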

\begin{proof}
Let $u_{n}$ be a minimizing sequence for $T$; in other words,
\begin{align}\label{2proof1_1}
\lim_{n \rightarrow \infty} T(u_{n}) = \inf_{u \in L^{q}(\Omega)}T(u) := T_{\text{min}}.
\end{align}
Since $T$ is CBV-coercive, the $\{u_{n}\}$ are BV-bounded.
By Theorem 2.5 in \cite{BV1994}, there exists a subsequence $u_{n_{k}}$ which converges to some $\bar{u} \in L^{q}(\Omega)$.
Convergence is weak if $q = \frac{d}{d-1}$. By the (weak) lower semi-continuity of $T$,
\begin{align}\label{2proof1_2}
T(\bar{u}) \leq \liminf_{k \rightarrow \infty} T(u_{n_{k}}) = T_{\text{min}}.
\end{align}
Uniqueness of minimizers follows immediately from strict convexity.
\end{proof}

Next, we consider a sequence of perturbed problems
\begin{align}\label{2perturbed}
\min_{u \in L^{q}(\Omega)}T_{n}(u).
\end{align}
\begin{theorem}\label{2perturbed_th}
Assume $J$ is defined as in (\ref{2condition}), $1 \leq q < \frac{d}{d-1}$ and that $T$ and each of the $T_{n}$s are
CBV-coercive, lower semi-continuous, and have a unique minimizer. Assume in addition:
\begin{enumerate}
  \item Uniform CBV-coercivity: for any sequence $v_{n} \in L^{q}(\Omega)$,
  \begin{align}\label{2stability1}
  \lim_{n \rightarrow \infty}T_{n}(v_{n}) = \infty \quad \text{whenever} \quad \lim_{n\rightarrow \infty}J(v_{n}) = \infty.
  \end{align}
  \item Consistency: $T_{n} \rightarrow T$ uniformly on J-bounded sets, i.e. given $B > 0$ and $\epsilon > 0$,
  there exists $N$ such that
  \begin{align}\label{2stability2}
  |T_{n}(u) - T(u)| < \epsilon \quad \text{whenever} \quad n \geq N, \,\, J(u) \leq B.
  \end{align}
\end{enumerate}
Then problem (\ref{2main_problem}) is stable with respect to the perturbations (\ref{2perturbed}), i.e. if
$\bar{u}$ minimizes $T$ and $u_{n}$ minimizes $T_{n}$, then
\begin{align*}
\|u_{n} - \bar{u}\|_{L^{q}(\Omega)} \rightarrow 0.
\end{align*}
If $q = \frac{d}{d-1}$, $d \geq 2$, and one can replaces the lower semi-continuity assumption on $T$ and each $T_{n}$
by weak lower semi-continuity, then convergence is weak:
\begin{align*}
u_{n} - \bar{u} \rightharpoonup 0.
\end{align*}
\end{theorem}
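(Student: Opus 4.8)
The plan is to adapt the standard stability argument for regularized inverse problems, reusing the coercivity/compactness/lower-semicontinuity machinery from the proof of Theorem \ref{2well-posedness} but now tracking the interaction between $T$ and the perturbations $T_n$ through the two extra hypotheses. The first step is to show that the minimizers $u_n$ of $T_n$ form a BV-bounded sequence. Since $u_n$ minimizes $T_n$ we have $T_n(u_n) \le T_n(\bar u)$; as $\bar u$ is a single fixed point it lies trivially in a $J$-bounded set, so the consistency hypothesis \eqref{2stability2} gives $T_n(\bar u) \to T(\bar u)$, and hence $\{T_n(u_n)\}$ is bounded above. If $\{J(u_n)\}$ were unbounded, I would pass to a subsequence along which $J(u_{n_k}) \to \infty$; uniform CBV-coercivity \eqref{2stability1} would then force $T_{n_k}(u_{n_k}) \to \infty$, contradicting the bound just obtained. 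Therefore $\sup_n J(u_n) < \infty$, and by $\|u\|_{BV} \le C J(u)$ the sequence $\{u_n\}$ is BV-bounded.

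Next I would extract convergent subsequences and identify the limit. Given \emph{any} subsequence of $\{u_n\}$, BV-boundedness together with the compactness result (Theorem 2.5 in \cite{BV1994}) furnishes a further subsequence $u_{n_k}$ converging to some $u^{*} \in L^{q}(\Omega)$ --- strongly when $q < \frac{d}{d-1}$ and weakly when $q = \frac{d}{d-1}$. To show $u^{*} = \bar u$, I would run the chain
\[
T(u^{*}) \le \liminf_{k} T(u_{n_k}) = \liminf_{k} T_{n_k}(u_{n_k}) \le \limsup_{k} T_{n_k}(u_{n_k}) \le T(\bar u).
\]
The first inequality is the (weak) lower semi-continuity of $T$. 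The middle equality uses consistency \eqref{2stability2} on the $J$-bounded set $\{u_{n_k}\}$ to replace $T$ by $T_{n_k}$ up to an error tending to zero. The last inequality combines the minimality $T_{n_k}(u_{n_k}) \le T_{n_k}(\bar u)$ with $T_{n_k}(\bar u) \to T(\bar u)$. Since $\bar u$ is the \emph{unique} minimizer of $T$, the resulting bound $T(u^{*}) \le T(\bar u)$ forces $u^{*} = \bar u$.

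Finally I would conclude convergence of the full sequence via the standard subsequence principle: I have shown that every subsequence of $\{u_n\}$ admits a further subsequence converging, in the relevant topology, to the same limit $\bar u$; in any topological space this implies $u_n \to \bar u$. This yields $\|u_n - \bar u\|_{L^{q}(\Omega)} \to 0$ in the case $q < \frac{d}{d-1}$, and the weak convergence $u_n - \bar u \rightharpoonup 0$ in the case $q = \frac{d}{d-1}$ (where $L^{d/(d-1)}$ is reflexive, since $q \in (1,2]$ for $d \ge 2$, so the weak subsequence extraction and the subsequence principle both apply).

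I expect the main obstacle to be the bookkeeping in the middle equality of the displayed chain: one must verify that the $J$-boundedness established in the first step is precisely what licenses the application of consistency \eqref{2stability2} at the \emph{running} points $u_{n_k}$ (not merely at the fixed point $\bar u$), and that the $\epsilon$ in \eqref{2stability2} can be sent to zero uniformly along the subsequence. Everything else is a routine recombination of coercivity, compactness, lower semi-continuity, and uniqueness, closely mirroring the proof of Theorem \ref{2well-posedness}.
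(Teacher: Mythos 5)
Your proposal is correct and follows essentially the same route as the paper's proof: bound $T_n(u_n)$ via $T_n(u_n)\le T_n(\bar u)$ and consistency, deduce $J$-boundedness from uniform CBV-coercivity, extract a convergent subsequence by the BV compactness theorem, and use lower semi-continuity plus consistency along the subsequence together with uniqueness of $\bar u$. The only (cosmetic) difference is that you conclude via the subsequence principle while the paper argues by contradiction with a limit $\hat u\neq\bar u$.
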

\begin{proof}
Note that $T_{n}(u_{n}) \leq T_{n}(\bar{u})$, by assumption (2), we have
\begin{align*}
\liminf_{n \rightarrow \infty}T_{n}(u_{n}) \leq \limsup_{n\rightarrow \infty}T_{n}(u_{n}) \leq T(\bar{u}) < \infty
\end{align*}
and hence by assumption (1), the $u_{n}$s are $J$-bounded.
Remember the properties of $J$, the $u_{n}$s are BV-bounded.
Now suppose our results does not hold. By Theorem 2.5 in \cite{BV1994}, there exists
a subsequence $u_{n_{k}}$ which converges in $L^{q}(\Omega)$(weak $L^{q}(\Omega)$ if $q = \frac{d}{d-1}$)
to some $\hat{u} \neq \bar{u}$. By the (weak) lower semi-continuity of $T$,
\begin{align*}
T(\hat{u}) & \leq \liminf_{k \rightarrow \infty}T(u_{n_{k}}) \\
& = \lim_{k \rightarrow \infty}(T(u_{n_{k}}) - T_{n_k}(u_{n_k})) + \liminf_{k \rightarrow \infty}T_{n_k}(u_{n_k})   \\
& \leq T(\bar{u}).
\end{align*}
But this contradicts the uniqueness of the minimizer $\bar{u}$ of $T$.
\end{proof}

\subsection{Variable TV Regularization for General Linear Problems}

In this subsection, we consider the following special form of $T$:
\begin{align}\label{2_2pro}
T(u) = F(u) + \frac{\lambda}{2} \|Su - g\|_{L^{2}(\Omega)}^{2},
\end{align}
where
\begin{align}\label{2_2defF}
F(u) = \max_{\nu(x) \geq 0, |\sigma(x)| \leq 1} \int_{\Omega} \left( \nabla \cdot (\tilde{p}\nu^{\tilde{p} - 1}\sigma)u
- (\tilde{p} - 1)\nu^{\tilde{p}} \right)dx
\end{align}
and $\nu$ ranges over the set of $C^{1}(\bar{\Omega})$ functions with positive minimum, $\sigma$
ranges over functions in $C_{c}^{1}(\Omega)$ with $|\sigma(x)| \leq 1$, $\tilde{p}$ defined as in (\ref{1defp}),
$S$ is a linear bounded operator from $L^{q}(\Omega)$ to $L^{2}(\Omega)$, $g$ is a function in $L^{2}(\Omega)$, $\lambda > 0$ is
a positive real number.
As demonstrated in \cite{var2009}, if $u, \tilde{p}$ are $C^1$, $F(u)$ defined above is equivalent to
\begin{align}\label{2_2defst}
\int_{\Omega} |\nabla u|^{\tilde{p}} dx.
\end{align}
So instead of (\ref{1_unstranation}) in the previous section, in this subsection we consider $T$ defined in (\ref{2_2pro}).
For this particular $T$, we define
\begin{align}\label{2_2defJ}
J(u) = \|u\|_{L^{2}(\Omega)} + F(u) + \frac{1}{4}|\Omega|,
\end{align}
which obviously satisfies $\|u\|_{BV} \leq C J(u)$.
\begin{theorem}\label{2_2mainth}
Assume $d = 2$ and that $1 \leq q \leq \frac{d}{d-1}$, $S$ is a linear bounded operator from $L^{q}(\Omega)$ to $L^{2}(\Omega)$ and $g$ is a function in $L^{2}(\Omega)$. In addition, we assume that
\begin{align}\label{2_2assumeS}
S\chi_{\Omega} \neq 0.
\end{align}
Then $T$ defined in (\ref{2_2pro}) is CBV-coercive with $J$ defined in (\ref{2_2defJ}), and the functional $T$ has a minimizer.
\end{theorem}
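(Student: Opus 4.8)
The plan is to split the statement into two independent tasks: first, verify that $T$ is CBV-coercive relative to the $J$ of (\ref{2_2defJ}); second, check the (weak) lower semi-continuity hypotheses of Theorem \ref{2well-posedness}, so that existence of a minimizer follows immediately. Since $d=2$ we have $\frac{d}{d-1}=2$, so the admissible range $1\le q\le 2$ falls exactly into the two regimes covered by Theorem \ref{2well-posedness}; nothing beyond that theorem is needed for the existence claim once coercivity and semicontinuity are in hand.

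For coercivity I would argue by contradiction: suppose $J(u_n)\to+\infty$ while $T(u_n)$ stays bounded along a subsequence. Because $T(u)=F(u)+\frac{\lambda}{2}\|Su-g\|_{L^2}^2\ge F(u)\ge 0$, boundedness of $T(u_n)$ forces both $F(u_n)$ and $\|Su_n-g\|_{L^2}$ to be bounded. The first fact to record is that $F$ controls the homogeneous seminorm up to a fixed additive constant, namely $\|u\|_{\dot{BV}}\le F(u)+\frac{1}{4}|\Omega|$; this is the sharp estimate underlying the stated inequality $\|u\|_{BV}\le CJ(u)$, and it comes out of the dual description (\ref{2_2defF}) by feeding in, for an arbitrary admissible $\sigma_0$, the pointwise-optimal choice $\nu=\nu^{\ast}(x)=(1/\tilde p)^{1/(\tilde p-1)}$ and $\sigma=\sigma_0/(\tilde p\,\nu^{\tilde p-1})$, then bounding the penalty $\int_{\Omega}(\tilde p-1)(\nu^{\ast})^{\tilde p}\,dx\le \frac14|\Omega|$ using $\tilde p\in[1,2]$. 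Consequently bounded $F(u_n)$ yields bounded $\|u_n\|_{\dot{BV}}$, and since $J(u_n)=\|u_n\|_{L^2}+F(u_n)+\frac14|\Omega|\to\infty$, we must have $\|u_n\|_{L^2}\to\infty$.

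Next I would decompose $u_n=c_n\chi_\Omega+w_n$ with $c_n=\frac{1}{|\Omega|}\int_\Omega u_n\,dx$ and $w_n$ of zero mean. The Poincaré--Wirtinger inequality together with the embedding $BV(\Omega)\hookrightarrow L^{d/(d-1)}(\Omega)=L^2(\Omega)$ bounds $\|w_n\|_{L^q}$ by $C\|u_n\|_{\dot{BV}}$; hence $\{w_n\}$ is bounded in $L^q$, and since $S$ is bounded from $L^q$ to $L^2$, so is $\{Sw_n\}$ in $L^2$. As $\|u_n\|_{L^2}\to\infty$ while the oscillatory parts stay bounded, the means blow up, $|c_n|\to\infty$. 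Writing $Su_n=c_n\,S\chi_\Omega+Sw_n$ and using the reverse triangle inequality,
\begin{align*}
\|Su_n-g\|_{L^2}\ge |c_n|\,\|S\chi_\Omega\|_{L^2}-\|Sw_n\|_{L^2}-\|g\|_{L^2},
\end{align*}
and here the hypothesis (\ref{2_2assumeS}), $S\chi_\Omega\neq 0$, is precisely what makes $\|S\chi_\Omega\|_{L^2}>0$, so the right-hand side tends to $+\infty$. This contradicts the boundedness of $\|Su_n-g\|_{L^2}$, proving CBV-coercivity. For existence I would then verify (weak) lower semicontinuity: $F$ is a supremum over fixed $\nu,\sigma$ of the affine maps $u\mapsto\int_\Omega\nabla\cdot(\tilde p\,\nu^{\tilde p-1}\sigma)u\,dx-\int_\Omega(\tilde p-1)\nu^{\tilde p}\,dx$, whose linear parts are weakly continuous on $L^q$ because the test functions $\nabla\cdot(\tilde p\,\nu^{\tilde p-1}\sigma)$ lie in $C_c(\Omega)\subset L^{q'}(\Omega)$; a supremum of weakly continuous affine functionals is convex and weakly lower semicontinuous, and the fidelity term $\frac{\lambda}{2}\|Su-g\|_{L^2}^2$ is convex and continuous on $L^q$. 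Thus $T$ is convex and (weakly) lower semicontinuous, and Theorem \ref{2well-posedness} supplies a minimizer in both the $q<2$ and $q=2$ cases.

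The step I expect to be the real obstacle is the seminorm estimate $\|u\|_{\dot{BV}}\le F(u)+\frac14|\Omega|$: the variable exponent makes the duality between $F$ and the ordinary total variation genuinely nontrivial, and one must choose $\nu$ pointwise-optimally and control the resulting penalty uniformly over $\tilde p\in[1,2]$ so that the additive constant is independent of $u$. Everything else — the mean/oscillation split, the Poincaré--Wirtinger bound, and the exploitation of $S\chi_\Omega\neq 0$ to defeat the blow-up of the means — is then routine.
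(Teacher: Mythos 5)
Your proposal is correct and follows essentially the same route as the paper: the same decomposition of $u$ into its mean $\frac{1}{|\Omega|}\int_\Omega u\,dx\,\chi_\Omega$ plus a zero-mean part, the same Poincar\'e/BV-embedding bound on the oscillatory part, the same exploitation of $S\chi_\Omega\neq 0$ to control the mean through the fidelity term, and the same appeal to (weak) lower semicontinuity of $F$ together with Theorem \ref{2well-posedness}. The only cosmetic differences are that you run the coercivity step by contradiction along a sequence where the paper derives the explicit bound $J(u)\le C\,T(u)+C'$ via a two-case analysis, and that the seminorm estimate $\|u\|_{\dot{BV}}\le F(u)+\tfrac{1}{4}|\Omega|$ and the lower semicontinuity of $F$, which you sketch from the dual formulation, are simply cited from \cite{var2009} there.
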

\begin{proof}
The lower semi-continuous or weakly lower semi-continuous of $F(u)$ follows from Theorem 9 and Theorem 12 in \cite{var2009}.
Hence the (weakly) lower semi-continuous of $J(u)$ obviously hold.
If we can prove $T$ in (\ref{2_2pro}) is CBV-coercive, by Theorem \ref{2well-posedness}, the functional $T$
has a minimizer. So the main task is to prove $T$ is CBV-coercive.
Decompose $u$ as follows:
\begin{align}\label{2_2decom}
u = \nu + w
\end{align}
where
\begin{align}\label{2_2decom1}
w = \frac{\int_{\Omega}u dx}{|\Omega|}\chi_{\Omega} \quad\quad \int_{\Omega}\nu dx = 0.
\end{align}
Using Poincar\'{e} inequality and H\"{o}lder's inequality, there exists a positive constant $C$ such that
for any $q$ such that $1 \leq q \leq \frac{d}{d-1} = r$,
\begin{align}\label{2_2proof1}
\begin{split}
\|\nu\|_{L^{q}(\Omega)} & \leq |\Omega|^{\frac{1}{q} - \frac{1}{r}}\|\nu\|_{L^{r}(\Omega)}  \\
& \leq (|\Omega| + 1)^{1-\frac{1}{r}}C \|\nu\|_{\dot{BV}}   \\
& \leq C_{1} \left(F(\nu) + \frac{1}{4}|\Omega|\right)
\end{split}
\end{align}
where $C_{1} := (|\Omega| + 1)^{\frac{1}{d}}C$. In the last inequality of (\ref{2_2proof1}),
we used (35) in \cite{var2009}.
Using (\ref{2_2proof1}) and the decomposition (\ref{2_2decom}), we have
\begin{align}\label{2_2proof2}
\begin{split}
J(u) & = \|u\|_{L^{2}(\Omega)} + F(u) + \frac{1}{4} |\Omega|    \\
& \leq \|w\|_{L^{2}(\Omega)} + (C_{1} + 1)(F(\nu) + \frac{1}{4} |\Omega|).
\end{split}
\end{align}
From the assumption (\ref{2_2assumeS}), there exists $C_{2} > 0$ such that
\begin{align}\label{2_2proof3}
\|Sw\|_{L^{2}(\Omega)} = C_{2}\|w\|_{L^{2}(\Omega)}.
\end{align}
From the definition of $T$ and the decomposition (\ref{2_2decom}), we obtain
\begin{align}\label{2_2proof4}
\begin{split}
T(u) & = F(u) + \frac{\lambda}{2} \|(S\nu - g) + Sw\|_{L^{2}}^{2}   \\
& \geq F(u) + \frac{\lambda}{2} (\|S\nu - g\|_{L^{2}} - \|Sw\|_{L^{2}})^{2}   \\
& \geq F(u) + \frac{\lambda}{2} \|Sw\|_{L^{2}} (\|Sw\|_{L^{2}} - 2\|S\nu - g\|_{L^{2}}).
\end{split}
\end{align}
By (\ref{2_2proof1}), we obtain
\begin{align}\label{2_2proof5}
\|S\nu - g\|_{L^{2}} \leq C_{1}\|S\|F(u) + \frac{1}{4}C_{1}\|S\| |\Omega| + \|g\|_{L^{2}}.
\end{align}
Combining (\ref{2_2proof3}), (\ref{2_2proof4}) and (\ref{2_2proof5}), we have
\begin{align}\label{2_2proof6}
\begin{split}
T(u) \geq & F(u) + \frac{\lambda}{2}C_{2}\|w\|_{L^{2}}(C_{2}\|w\|_{L^{2}}  \\
& \quad\quad - 2(C_{1}\|S\|F(u) + \frac{1}{4}C_{1}\|S\| |\Omega| + \|g\|_{L^{2}})).
\end{split}
\end{align}
From the definition of $T(u)$ in (\ref{2_2pro}), we obtain that
\begin{align}\label{2_2proof8}
F(u) \leq T(u).
\end{align}
\begin{description}
  \item[Case 1] $C_{2}\|w\|_{L^{2}} - 2(C_{1}\|S\|F(u) + \frac{1}{4}C_{1}\|S\| |\Omega| + \|g\|_{L^{2}})) \geq 1.$
  From (\ref{2_2proof6}), we obtain
  \begin{align}\label{2_2proof7}
  \|w\|_{L^{2}} \leq \frac{2}{\lambda C_{2}}T(u).
  \end{align}
  Hence, considering (\ref{2_2proof8}), we finally obtain
  \begin{align}\label{2_2proof9}
  J(u) \leq \left( \frac{2}{\lambda C_{2}} + (C_{1}+1) \right)T(u) + \frac{C_{1}+1}{4}|\Omega|.
  \end{align}
  \item[Case 2] $C_{2}\|w\|_{L^{2}} - 2(C_{1}\|S\|F(u) + \frac{1}{4}C_{1}\|S\| |\Omega| + \|g\|_{L^{2}}) < 1.$
  Obviously, we have
  \begin{align}\label{2_2proof10}
  \|w\|_{L^{2}} \leq \frac{1 + 2(C_{1}\|S\|F(u) + \frac{1}{4}C_{1}\|S\| |\Omega| + \|g\|_{L^{2}})}{C_{2}}
  \end{align}
  Combining (\ref{2_2proof8}) with the above inequality, we obtain
  \begin{align}\label{2_2proof11}
  J(u) \leq \left( \frac{2C_{1}\|S\|}{C_{2}} + C_{1} + 1 \right)T(u) + \frac{1+\frac{1}{4}C_{1}\|S\| |\Omega| +\|g\|_{L^{2}}}{C_{2}}.
  \end{align}
\end{description}
Considering case 1 and case 2, (\ref{2_2proof9}) and (\ref{2_2proof11}) yields the CBV-coercivity of $T$.
\end{proof}
\begin{remark}\label{2_2remark1}
In the above theorem, if we assume the solution lies in a closed convex subset $\mathcal{K}$ of $L^{q}$ with $1\leq q \leq \frac{d}{d-1}$,
$T$ can be written as
\begin{align}\label{2_2remarkin1}
T(u) = F(u) + \frac{\lambda}{2} \|Su - g\|_{L^{2}(\Omega)}^{2} + \frac{1}{\chi_{\mathcal{K}}}.
\end{align}
Because the above $T$ is strictly convex, using same procedure as in the proof of theorem \ref{2_2mainth}, we obtain
the following theorem.
\begin{theorem}\label{2_2mainthunique}
Assume $d = 2$ and that $1 \leq q \leq \frac{d}{d-1}$, $\mathcal{K}$ is a closed convex subset of $L^{q}(\Omega)$,
$S$ is a linear bounded operator from $L^{q}(\Omega)$ to $L^{2}(\Omega)$ and $g$ is a function in $L^{2}(\Omega)$. In addition, we assume that
\begin{align}\label{2_2assumeSS}
S\chi_{\Omega} \neq 0.
\end{align}
Then $T$ defined in (\ref{2_2pro}) is CBV-coercive with $J$ defined in (\ref{2_2defJ}), and the functional $T$ has a
unique constrained minimizer over $\mathcal{K}$.
\end{theorem}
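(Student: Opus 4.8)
The plan is to obtain this as a direct corollary of Theorem \ref{2well-posedness} applied to the augmented functional in (\ref{2_2remarkin1}), so the entire task reduces to verifying its three hypotheses — CBV-coercivity, (weak) lower semicontinuity, and strict convexity — for $T(u) = F(u) + \frac{\lambda}{2}\|Su-g\|_{L^{2}(\Omega)}^{2} + 1/\chi_{\mathcal{K}}$, where the last term equals $1$ on $\mathcal{K}$ and $+\infty$ off it. Since $d=2$ gives $\frac{d}{d-1}=2$, the admissible range $1\leq q\leq 2$ is exactly the range covered by Theorem \ref{2well-posedness}, with strong convergence for $q<2$ and weak convergence at the endpoint $q=2$; the uniqueness clause of that theorem is what will deliver the single constrained minimizer.

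First I would dispose of CBV-coercivity by monotonicity, which is precisely the "same procedure as in the proof of theorem \ref{2_2mainth}" promised in the remark. The added term $1/\chi_{\mathcal{K}}$ is $\geq 1$ wherever it is finite and $+\infty$ elsewhere, so the constrained $T$ dominates the unconstrained functional of Theorem \ref{2_2mainth} pointwise, with the same $J$ from (\ref{2_2defJ}). Because that unconstrained functional is already CBV-coercive, the chain of estimates (\ref{2_2proof1})--(\ref{2_2proof11}) carries over verbatim: along any sequence with $J(u_{n})\rightarrow\infty$, either the $u_{n}$ eventually leave $\mathcal{K}$ (so $T(u_{n})=+\infty$) or they stay in $\mathcal{K}$ (so $T(u_{n})$ is bounded below by the unconstrained value, which blows up), and in either case $T(u_{n})\rightarrow\infty$.

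Next I would handle lower semicontinuity. The regularizer $F$ is (weakly) lower semicontinuous by Theorems 9 and 12 of \cite{var2009}, and the fidelity term $\frac{\lambda}{2}\|Su-g\|_{L^{2}}^{2}$ is convex and continuous on $L^{q}(\Omega)$ since $S$ is bounded and linear, hence weakly lower semicontinuous. The indicator term is lower semicontinuous because $\mathcal{K}$ is closed, and it remains weakly lower semicontinuous at the endpoint $q=2$ because a closed convex set is weakly closed by Mazur's lemma. Summing the three, $T$ is (weakly) lower semicontinuous, so existence of a constrained minimizer follows immediately from Theorem \ref{2well-posedness}.

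The hard part will be uniqueness, that is, genuinely justifying the strict convexity asserted in the remark. Here $F$ is only convex — it is a supremum of functionals that are affine in $u$ once the exponent $\tilde{p}$ is frozen at its data-driven value (\ref{1_u0}), and TV-type energies are never strictly convex along constant directions — the indicator is convex but flat on $\mathcal{K}$, and $\frac{\lambda}{2}\|Su-g\|^{2}$ is convex but strictly convex only in the directions on which $S$ is injective. So strict convexity of the sum cannot follow from $S\chi_\Omega\neq 0$ alone: if $Su_{1}=Su_{2}$ for distinct $u_{1},u_{2}\in\mathcal{K}$ and $F$ were affine along the connecting segment, $T$ would be affine there and uniqueness would fail. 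The clean way I would close this gap in the application is to exploit that the operator $S$ defined through the symbol $\hat S(\xi)=E_{\alpha,1}(-|\xi|^{\beta}T^{\alpha})$ in (\ref{1_fourierSolution}), which is strictly positive for every $\xi$, is injective; then the fidelity term is strictly convex and the flatness of $F$ and of the indicator is harmless, so the uniqueness clause of Theorem \ref{2well-posedness} applies. I would flag explicitly in the proof that for a fully general bounded $S$ the strict convexity, and hence uniqueness, requires either injectivity of $S$ or strict convexity of $F$, and that convexity of $F$ itself presupposes the frozen-exponent reduction (\ref{1_u0}) rather than the genuinely $u$-dependent exponent (\ref{1_u0bian}).
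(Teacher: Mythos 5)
Your proposal follows the same route the paper itself takes: the theorem is stated inside Remark \ref{2_2remark1}, and the paper's entire justification consists of the single sentence that the augmented functional (\ref{2_2remarkin1}) is strictly convex and that the rest follows ``using same procedure as in the proof of theorem \ref{2_2mainth}.'' Your verification of CBV-coercivity by monotone comparison with the unconstrained functional, and of (weak) lower semicontinuity --- including weak closedness of the closed convex set $\mathcal{K}$ at the endpoint $q=2=\frac{d}{d-1}$ --- is exactly what that sentence leaves implicit, and it is correct. Where you genuinely go beyond the paper is the uniqueness step, and your criticism is well founded: strict convexity of $T$ does not follow from the stated hypotheses. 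Indeed $F$ is a supremum of functionals affine in $u$, the term $1/\chi_{\mathcal{K}}$ is flat on $\mathcal{K}$, and the fidelity term is strictly convex only transversally to $\ker S$; the assumption $S\chi_{\Omega}\neq 0$ merely removes the constant direction from $\ker S$, so for a general bounded $S$ with nontrivial kernel one can produce a segment in $\mathcal{K}$ along which $T$ is affine (for instance $u$ and $2u$ with $Su=0$ and $\tilde p\equiv 1$ on the support of $\nabla u$, where $F$ is positively homogeneous), and uniqueness fails. The paper simply asserts strict convexity and never addresses this. Your repair --- invoking injectivity of the concrete solution operator with symbol $\hat S(\xi)=E_{\alpha,1}(-|\xi|^{\beta}T^{\alpha})>0$ --- does restore strict convexity of the fidelity term and hence uniqueness in the application of Theorem \ref{2_3mainth}, but it is an additional hypothesis relative to the general statement of Theorem \ref{2_2mainthunique}; as you correctly flag, the general theorem as written needs either injectivity of $S$ or some strict convexity of $F$ appended to its hypotheses for the uniqueness claim to hold.
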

\end{remark}

Next, we addresses the stability of minimizers to functionals of (\ref{2_2pro}).
Consider perturbed functionals
\begin{align}\label{2_2proper}
T_{n}(u) = F(u) + \frac{\lambda}{2} \|S_{n}u - g_{n}\|_{L^{2}(\Omega)}^{2},
\end{align}
\begin{theorem}\label{2_2mainper}
Assume $1 \leq q \leq \frac{d}{d-1}$, $\lim_{n\rightarrow \infty}\|g_{n} - g\|_{L^{2}(\Omega)} = 0$,
the $S_{n}$s are each bounded linear and converge pointwise to $S$, and for each $n$,
\begin{align}\label{2_2per1}
\|S_{n}\chi_{\Omega}\|_{L^{2}} \geq \gamma > 0.
\end{align}
Also assume each $S_{n}$ has a unique minimizer $u_{n}$ and that $S$ has a unique minimizer $\bar{u}$.
Then for $1 \leq q < \frac{d}{d-1}$, we have
\begin{align}\label{2_2per2}
\lim_{n\rightarrow \infty}\|u_{n} - \bar{u}\|_{L^{q}(\Omega)} = 0,
\end{align}
for $q = \frac{d}{d-1}$, the convergence is weak
\begin{align}\label{2_2per3}
u_{n} \rightharpoonup \bar{u}.
\end{align}
\end{theorem}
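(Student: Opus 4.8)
The plan is to verify that the pair $(T, T_n)$ meets every hypothesis of the abstract stability result Theorem \ref{2perturbed_th} and then to invoke that theorem directly. Four things must be checked: that $T$ and each $T_n$ are individually CBV-coercive, (weakly) lower semi-continuous, and uniquely minimized; the uniform CBV-coercivity (\ref{2stability1}); and the consistency (\ref{2stability2}). I would dispatch the individual properties first. Each $T_n$ is CBV-coercive by Theorem \ref{2_2mainth} applied with $S_n$ in place of $S$ and $g_n$ in place of $g$, since (\ref{2_2per1}) forces $S_n\chi_{\Omega} \neq 0$. For $T$ itself, pointwise convergence gives $S_n\chi_{\Omega} \to S\chi_{\Omega}$ in $L^2$, so $\|S\chi_{\Omega}\|_{L^2} = \lim_n \|S_n\chi_{\Omega}\|_{L^2} \geq \gamma > 0$, whence $S\chi_{\Omega} \neq 0$ and Theorem \ref{2_2mainth} applies again. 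The (weak) lower semi-continuity of $F$ is quoted from \cite{var2009}, while each fidelity term $\frac{\lambda}{2}\|S_n u - g_n\|_{L^2}^2$ and $\frac{\lambda}{2}\|Su - g\|_{L^2}^2$ is convex and continuous, hence (weakly) lower semi-continuous; uniqueness of minimizers is assumed in the statement.

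Next I would establish the uniform CBV-coercivity (\ref{2stability1}). The key point is that the constants in the coercivity estimate inside the proof of Theorem \ref{2_2mainth} can be taken independent of $n$. Indeed, the uniform boundedness principle applied to the pointwise-convergent family $\{S_n\}$ gives $\sup_n \|S_n\| < \infty$; the convergence $g_n \to g$ gives $\sup_n \|g_n\|_{L^2} < \infty$; and (\ref{2_2per1}) furnishes the uniform lower bound $\gamma$ in the role played by $C_2$ in (\ref{2_2proof3}). Re-running the Case 1 / Case 2 dichotomy of that proof with these uniform constants yields an estimate $J(v_n) \leq C(T_n(v_n) + 1)$ with $C$ independent of $n$, which is exactly (\ref{2stability1}).

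The heart of the matter is consistency (\ref{2stability2}). Writing $a_n = S_n u - g_n$ and $b = Su - g$, polarization gives $|T_n(u) - T(u)| = \frac{\lambda}{2}|\langle a_n - b,\, a_n + b\rangle| \leq \frac{\lambda}{2}\bigl(\|(S_n - S)u\|_{L^2} + \|g_n - g\|_{L^2}\bigr)\bigl(\|a_n\|_{L^2} + \|b\|_{L^2}\bigr)$. On a $J$-bounded set $\{J(u) \leq B\}$ the norms $\|u\|_{L^q}$, and hence $\|a_n\|_{L^2} + \|b\|_{L^2}$, are uniformly bounded, so it remains to prove $\sup_{J(u) \leq B}\|(S_n - S)u\|_{L^2} \to 0$. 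This is where I expect the main obstacle: pointwise convergence of a uniformly bounded operator sequence gives uniform convergence only on precompact sets. The saving fact is that a $J$-bounded set is BV-bounded and therefore precompact in $L^q(\Omega)$ for $1 \leq q < \frac{d}{d-1}$ by Theorem 2.5 of \cite{BV1994}; on such a set the uniformly bounded family $\{S_n\}$ then converges to $S$ uniformly, establishing (\ref{2stability2}).

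Finally, I would treat the endpoint $q = \frac{d}{d-1}$ separately, where BV-bounded sets are only weakly precompact in $L^q$ and the uniform-convergence step no longer follows from soft functional analysis. I anticipate this to be the genuine technical difficulty: one either restricts the verification to the precompact sets that actually occur, running the compactness argument in $L^{q'}$ for a fixed $q' < \frac{d}{d-1}$ and using that the sequences extracted in the proof of Theorem \ref{2perturbed_th} are BV-bounded, or one appeals to additional smoothing/compactness of the operators $S_n$ and $S$. Granting consistency, Theorem \ref{2perturbed_th} applies verbatim and delivers $\|u_n - \bar{u}\|_{L^q(\Omega)} \to 0$ for $1 \leq q < \frac{d}{d-1}$ and the weak convergence $u_n \rightharpoonup \bar{u}$ at the critical exponent.
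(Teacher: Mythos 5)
Your proposal follows the same route as the paper: both proofs reduce the theorem to verifying hypotheses (1) and (2) of Theorem \ref{2perturbed_th}, both obtain uniform CBV-coercivity by re-running the $u = \nu + w$ decomposition of Theorem \ref{2_2mainth} with constants made uniform in $n$ (your appeal to Banach--Steinhaus for $\sup_n\|S_n\|<\infty$ is a detail the paper simply assumes by ``letting $M$ be an upper bound''), and both bound $|T_n(u)-T(u)|$ by the same polarization estimate $\frac{\lambda}{2}\bigl(\|S_nu-Su\|_{L^2}+\|g_n-g\|_{L^2}\bigr)\bigl((\|S_n\|+\|S\|)\|u\|_{L^2}+\|g_n\|_{L^2}+\|g\|_{L^2}\bigr)$. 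The one place you genuinely diverge is the conclusion drawn from that estimate: the paper declares condition (2) ``obviously satisfied,'' whereas you correctly observe that pointwise convergence of a bounded operator family does not give $\sup_{J(u)\le B}\|(S_n-S)u\|_{L^2}\to 0$ without a compactness input, and you supply the missing step via precompactness of BV-bounded sets in $L^q$ for $q<\frac{d}{d-1}$ (or, more economically, by noting that consistency is only ever invoked along the convergent subsequences extracted in the proof of Theorem \ref{2perturbed_th}). Your further flag that the endpoint $q=\frac{d}{d-1}$ is the genuinely delicate case --- where BV-bounded sets are only weakly precompact and some additional compactness or weak-to-strong continuity of the $S_n$ is needed --- points at a real gap in the paper's own argument that the paper does not acknowledge; your proof is therefore more complete than the one it is being compared against.
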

\begin{proof}
It suffices to show that conditions (1) and (2) of Theorem \ref{2perturbed_th} hold.
For condition (1), put $u_{n} = \nu_{n} + w_{n}$ as in (\ref{2_2decom}) and (\ref{2_2decom1}),
and repeat the proof of theorem \ref{2perturbed_th}.
Since $\|S_{n}\chi_{\Omega}\|_{L^{2}} \geq \gamma\sqrt{|\Omega|}\|w_{n}\|_{L^{2}(\Omega)}$,
letting $M$ be an upper bound on $S$ and each $S_{n}$, $m$ is an upper bound on $\|g\|_{L^{2}}$
and each $\|g_{n}\|_{L^{2}}$, one obtains
\begin{align}\label{2_2per4}
\begin{split}
T_{n}(u_{n}) \geq & F(u_{n}) + \frac{\lambda}{2}\gamma \sqrt{|\Omega|}\|w_{n}\|_{L^{2}}(\gamma \sqrt{|\Omega|}\|w_{n}\|_{L^{2}}  \\
& \quad\quad - 2(C_{1}M F(u_{n}) + \frac{1}{4}C_{1}\|S_{n}\|\text{Vol}(\Omega) + m)).
\end{split}
\end{align}
This yields uniform CBV-coercivity by same argument as in the proof of Theorem \ref{2perturbed_th}.
Since
\begin{align*}
|T_{n}(u) - T(u)| \leq & \frac{\lambda}{2}(\|S_{n}u - Su\|_{L^{2}} + \|g_{n} - g\|_{L^{2}})
((\|S_{n}\|+\|S\|)\|u\|_{L^{2}}     \\
& + \|g_{n}\|_{L^{2}} + \|g\|_{L^{2}}),
\end{align*}
condition (2) is obviously satisfied.
\end{proof}

\subsection{Variable TV Regularization for Fractional Backward Diffusion}
In this subsection, we firstly construct an abstract fractional evolution equation based on the following
time-space fractional diffusion system for homogeneous media:
\begin{align}\label{3_timespacefrac}
\begin{split}
\left\{ \begin{array}{ll}
\partial_{t}^{\alpha} v(t,x) + (-\Delta)^{\beta}v(t,x) = 0 & (x,t) \in \mathbb{R}^{2} \times (0,\infty) \\
v(x,0) = u(x) & x \in \mathbb{R}^{2}
\end{array} \right. .
\end{split}
\end{align}
with $0 < \alpha \leq 1$, $\beta \in (1/2,1]$.
$u$ is the initial data in $\mathbb{R}^{2}$ with compact support in a bounded convex open subset $\Omega$
of the plane with Lipschitz continuous boundary $\partial\Omega$.

Define an operator $A = -\Delta$ with domain
\begin{align}\label{2_3domainA}
D(A) = \{ u \in H_{0}^{1}(\mathbb{R}^{2}) : \Delta u \in L^{2}(\mathbb{R}^{2}) \}.
\end{align}
Let $X = L^{2}(\mathbb{R}^{2})$, then $A$ is a bounded linear operator
defined on $X$. By Theorem 2.4.1 in \cite{fracope}, we know that $A$ is m-accretive and
$-A$ generates a uniformly exponentially stable and contractive $C_{0}$-semigroup, denoted as $T(t)$.
From the proof of Theorem 2.4.1 in \cite{fracope}, we know that there exists a constant $c > 0$ such that
\begin{align}\label{2_3decay}
\|T(t)\| \leq e^{-ct},
\end{align}
where $\|\cdot\|$ is the operator norm.
Considering (\ref{2_3decay}), for $\frac{1}{2} < \beta \leq 1$, we can define the following bounded linear operator
\begin{align}\label{2_3nagativePower}
A^{-\beta}x = \frac{1}{\Gamma(\beta)} \int_{0}^{\infty} t^{\beta - 1}T(t)x dt \quad \forall \,\, x\in X.
\end{align}
Then as illustrated in \cite{fracope}, \cite{Haase2006} or \cite{wang2006}, we can define the operator $A^{\beta}$ as
the inverse operator of $A^{-\beta}$. Hence, we have
\begin{align}\label{2_3opsitivePower}
A^{\beta}A^{-\beta}x = x \quad \forall \,\, x\in X \quad \text{and} \quad D(A^{\beta}) = R(A^{-\beta}).
\end{align}
For any $x \in D(A)$, the above positive power of operator $A$ has the following Balakrichnan representations \cite{Haase2006}
\begin{align}\label{2_3formpositive}
A^{\beta}x = \frac{\sin(\pi \alpha)}{\pi} \int_{0}^{\infty} t^{\alpha-1} (t+A)^{-1} Ax dx \quad \forall \,\, x\in D(A),
\end{align}
which may provide more intuitive ideas to the readers.
With these preparations, we can recast system (\ref{3_timespacefrac}) into the following abstract ordinary differential
equations on Banach space $X = L^{2}(\mathbb{R}^{2})$ as follows
\begin{align}\label{3_timespacefracAbstract}
\begin{split}
\left\{ \begin{array}{ll}
\partial_{t}^{\alpha} v(t) + A^{\beta} v(t) = 0 & t \in (0,\infty) \\
v(0) = u &
\end{array} \right. .
\end{split}
\end{align}
\begin{remark}\label{2_3equivalence}
Usually the fractional Laplacian operator defined as $$(-\Delta)^{\beta}v := \mathcal{F}^{-1}(|\xi|^{2\beta}\hat{v}).$$
So in order to obtain a meaningful abstract form (\ref{3_timespacefracAbstract}), we need to state the equivalence of
$A^{\beta}$ defined in (\ref{2_3formpositive}) and the usual definition by fourier transform.
By Proposition 8.3.3 in \cite{Haase2006}, we know that if $v \in D(A^{\beta})$ then
$A^{\beta}v = \mathcal{F}^{-1}(|\xi|^{2\beta}\hat{v})$.
That is to say if $v \in H_{0}^{1}(\mathbb{R}^{2}) \cap H^{2\beta}(\mathbb{R}^{2})$,
the abstract form is equivalent to the usual definition by fourier transform.
\end{remark}

\begin{remark}\label{jieshi}
The operator $A$ can be defined more generally as follow
\begin{align*}
A = -\nabla\cdot (a(x)\nabla \cdot) + c(x),
\end{align*}
where $a(\cdot), c(\cdot)$ are functions in $C^{1}$ and in addition,
we suppose the operator $A$ defined above satisfies the strong elliptic condition.
It is well known that this operator generate a contractive $C_{0}-$semigroup \cite{evans} and we can define
fractional operator as in (\ref{2_3nagativePower}),(\ref{2_3opsitivePower}) and (\ref{2_3formpositive}),  so
the abstract fractional evolution equation (\ref{3_timespacefracAbstract}) incorporate
a natural generalization of fractional Laplace operator.
In the following part of this article, we only present the proof of the Laplace case.
That is because once we define the general operator mentioned in this remark appropriately, it
satisfies all the properties of the operator semigroup which we used and the proof will be almost same.
\end{remark}

Now we can prove the main theorem in this subsection.
\begin{theorem}\label{2_3mainth}
Let $\mathcal{K}$ be a closed convex subset of $L^{2}(\Omega)$, $g^{\delta}$ is the measurement data
in time $T$. Then the optimization problem
\begin{align}\label{2_3Question}
\argmin_{u \in \mathcal{K}} F(u) + \frac{\lambda}{2}\|S u - g^{\delta}\|_{L^{2}(\mathbb{R}^{2})}
\end{align}
has a unique minimizer over $\mathcal{K}$ for any fixed $\lambda > 0$.
\end{theorem}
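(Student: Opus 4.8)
The plan is to reduce the statement to the constrained general result Theorem \ref{2_2mainthunique} by verifying that the solution operator $S$ at time $T$ of the abstract fractional evolution equation (\ref{3_timespacefracAbstract}) meets the two hypotheses required there: that $S$ is a bounded linear operator into $L^{2}$, and that $S\chi_{\Omega}\neq 0$. Since $d=2$ forces $q=2=\frac{d}{d-1}$, the set $\mathcal{K}$ is a closed convex subset of $L^{2}(\Omega)$ exactly as required, and $g^{\delta}\in L^{2}$, so once these two properties of $S$ are in hand the conclusion is immediate.

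First I would pin down $S$ precisely. Given $u\in L^{2}$ (extended by zero off $\Omega$), let $v$ be the solution of (\ref{3_timespacefracAbstract}) furnished by the fractional semigroup / $\alpha$-resolvent theory of \cite{Peng2012,LiChen2010}, and set $Su:=v(T)$. Taking the Fourier transform as in (\ref{1_fourierSolution}) identifies $S$ as the Fourier multiplier with symbol $\hat{S}(\xi)=E_{\alpha,1}(-|\xi|^{\beta}T^{\alpha})$. The key analytic input is the classical estimate for the Mittag-Leffler function: for $0<\alpha\leq 1$ the map $x\mapsto E_{\alpha,1}(-x)$ is completely monotone on $[0,\infty)$, whence $0<E_{\alpha,1}(-x)\leq 1$, and therefore $0<\hat{S}(\xi)\leq 1$ for every $\xi$.

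Boundedness then follows at once from Plancherel's theorem: $\|Su\|_{L^{2}(\mathbb{R}^{2})}=\|\hat{S}\hat{u}\|_{L^{2}}\leq\|\hat{u}\|_{L^{2}}=\|u\|_{L^{2}}$, so $S$ is linear and bounded with $\|S\|\leq 1$. For the second hypothesis, the strict positivity $\hat{S}(\xi)>0$ shows $S$ is injective: $Su=0$ forces $\hat{S}\hat{u}=0$ almost everywhere and hence $\hat{u}=0$, i.e. $u=0$. In particular $S\chi_{\Omega}\neq 0$ because $\chi_{\Omega}\neq 0$, and moreover $C_{2}:=\|S\chi_{\Omega}\|_{L^{2}(\mathbb{R}^{2})}/\sqrt{|\Omega|}>0$, which is all that the coercivity argument of Theorem \ref{2_2mainth} exploits. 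Since that argument uses only linearity, boundedness and $S\chi_{\Omega}\neq 0$, it is insensitive to whether the fidelity norm is taken over $\Omega$ or over $\mathbb{R}^{2}$ and transfers verbatim to the present target space $L^{2}(\mathbb{R}^{2})$.

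With $S$ bounded linear and $S\chi_{\Omega}\neq 0$ established, the functional $F(u)+\frac{\lambda}{2}\|Su-g^{\delta}\|_{L^{2}(\mathbb{R}^{2})}^{2}$ is of the form (\ref{2_2pro}), and Theorem \ref{2_2mainthunique} applies with $d=2$ and $q=\frac{d}{d-1}=2$, yielding a unique minimizer over $\mathcal{K}$ for every fixed $\lambda>0$. I expect the main obstacle to be the first step rather than the subsequent verifications: making rigorous that the abstract equation (\ref{3_timespacefracAbstract}) with the Balakrishnan fractional power $A^{\beta}$ really does produce the concrete Mittag-Leffler multiplier of (\ref{1_fourierSolution}), and that the associated resolvent family is uniformly bounded. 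This requires combining Remark \ref{2_3equivalence} (the equivalence of $A^{\beta}$ with the Fourier definition on $H_{0}^{1}\cap H^{2\beta}$) with the subordination principle that transfers the contractivity of the $C_{0}$-semigroup generated by $-A$ through the fractional-in-time dynamics; the Mittag-Leffler bound above is precisely the spectral manifestation of that boundedness.
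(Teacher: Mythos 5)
Your proposal is correct, but it reaches the two required hypotheses of Theorem \ref{2_2mainthunique} by a genuinely different route than the paper. You exploit the constant-coefficient structure: $S$ is the Fourier multiplier with symbol $E_{\alpha,1}(-|\xi|^{\beta}T^{\alpha})$, and the complete monotonicity of $x\mapsto E_{\alpha,1}(-x)$ for $0<\alpha\le 1$ gives $0<\hat S(\xi)\le 1$, whence boundedness by Plancherel and $S\chi_{\Omega}\neq 0$ by injectivity. The paper instead stays at the abstract semigroup level: boundedness comes from the subordination formula $S_{\alpha,\beta}(T)=\int_{0}^{\infty}\phi_{T,\alpha}(s)S_{\beta}(s)\,ds$ with the Wright function $\Phi_{\alpha}$ a probability density (Lemma \ref{2_3subordinate}), and $S\chi_{\Omega}\neq 0$ comes from a contradiction argument using the positivity-preserving property of $S_{\beta}(s)$ (verified via the criterion of Lemma \ref{2_3judgepositive} and the identity $\int(\Delta u)u^{+}=-\int|\nabla u^{+}|^{2}$) together with strong continuity at $s=0$. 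Your argument is shorter and more elementary, and it in fact yields the stronger conclusion that $S$ is injective with $\|S\|\le 1$; but it is tied to $A=-\Delta$ on $\mathbb{R}^{2}$ and does not survive the generalization to variable-coefficient elliptic operators $A=-\nabla\cdot(a\nabla\cdot)+c$ that the paper explicitly targets in Remarks \ref{jieshi} and \ref{2_3remarkGeneral}, which is precisely what the positivity/subordination argument buys. Two small cautions: first, your injectivity gives $S\chi_{\Omega}\neq 0$ in $L^{2}(\mathbb{R}^{2})$, which suffices because the fidelity term in (\ref{2_3Question}) is over $\mathbb{R}^{2}$, but if one applies Theorem \ref{2_2mainthunique} literally with the data misfit restricted to $\Omega$ (as in (\ref{2_2pro}) and in the paper's definition (\ref{2_3defofS}) of $S$), one needs $S\chi_{\Omega}$ not to vanish a.e.\ \emph{on} $\Omega$, which injectivity alone does not give — the positivity argument does, so you should either keep the full-space norm throughout (as you do) or supply the positivity step. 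Second, the identification of the abstract solution operator of (\ref{3_timespacefracAbstract}) with the Mittag-Leffler multiplier, which you correctly flag as the main thing to make rigorous, is exactly the step the paper's abstract formulation is designed to avoid.
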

\begin{proof}
We attempt to use Theorem \ref{2_2mainthunique} to obtain the result, so we need to verify that the operator
$S$ is bounded from $L^{2}$ to $L^{2}$, and $S \chi_{\Omega} \neq 0$.

\textbf{Step 1}. Bounded of operator $S$.
Taking the measure $\mu$ in \cite{Jia2014} to be $\delta_{\beta}(\cdot)$, using Theorem 3.7 in \cite{Jia2014}, we find that
$-A^{\beta}$ generates a bounded $C_{0}$-semigroup denoted as $S_{\beta}(t)$.
For simplicity, we denote $S_{\beta}$ as $S_{\beta}(T)$ for short.
Using Corollary 2.10 in \cite{Bajlekova2001}, we know that $-A^{\beta}$ generate an $\alpha$-order
fractional semigroup proposed in \cite{Peng2012}. We denote the $\alpha$-order fractional
semigroup generated by $-A^{\beta}$ as $S_{\alpha,\beta}(t)$, particularly for $t = T$, denote
$S_{\alpha,\beta}$ as $S_{\alpha,\beta}(T)$ for short. Instead of the semigroup property, this $\alpha$-order fractional
semigroup satisfies the following equality
\begin{align*}
\int_{0}^{t+s}\frac{S_{\alpha,\beta}(\tau)}{(t+s-\tau)^{\alpha}} d\tau
- \int_{0}^{t}\frac{S_{\alpha,\beta}(\tau)}{(t+s-\tau)^{\alpha}} d\tau
& - \int_{0}^{s}\frac{S_{\alpha,\beta}(\tau)}{(t+s-\tau)^{\alpha}} d\tau  \\
& = \alpha \int_{0}^{t}\int_{0}^{s} \frac{S_{\alpha,\beta}(\tau_{1})
S_{\alpha,\beta}(\tau_{2})}{(t+s-\tau_{1}-\tau_{2})^{1+\alpha}}d\tau_{1}d\tau_{2}
\end{align*}
in the strong operator topology.
Noting our definition of $v(x,1)$ for $x \notin \Omega$ in Section 1, we have the following expression for operator $S$:
\begin{align}\label{2_3defofS}
\begin{split}
(Su)(x) = \left\{ \begin{array}{ll}
(S_{\alpha,\beta}u)(x), & x \in \Omega \\
g^{\delta}(x), & x \notin \Omega.
\end{array} \right. .
\end{split}
\end{align}
Now the meaning of the operator $S$ is not restricted to the one used in (\ref{1_timespacedomain}).
In order to obtain our results, we need to introduce
the following lemma (restated in our setting) proved in Section 3 of \cite{Bajlekova2001}.
\begin{lemma}\label{2_3subordinate}
Let $S_{\beta}(t)$ is a bounded $C_{0}$-semigroup, $S_{\alpha,\beta}(t)$ is an $\alpha$-order fractional
semigroup with $\alpha \in (0,1)$. Then the following representation holds
\begin{align}\label{2_3sub1}
S_{\alpha,\beta}(t) = \int_{0}^{\infty}\phi_{t,\alpha}(s) S_{\beta}(s) ds, \quad t > 0,
\end{align}
where $\phi_{t,\alpha}(s) = t^{-\alpha}\Phi_{\alpha}(st^{-\alpha})$, $\Phi_{\alpha}(z)$ is the function of Wright type defined as
\begin{align}\label{2_3defofPhi}
\Phi_{\alpha}(z) := \sum_{n=0}^{\infty} \frac{(-z)^{n}}{n! \Gamma(-\alpha n + 1 - \alpha)}, \quad 0< \alpha < 1,
\end{align}
and (\ref{2_3sub1}) holds in the strong sense.
\end{lemma}
For functions of Wright type, there are many usefully literatures \cite{Wright1940,Mainardi1996,Mainardi1994}.
Here we only recall that $\Phi_{\alpha}(t)$ is a probability density function satisfies:
\begin{align}\label{2_3proPhi}
\Phi_{\alpha}(t) \geq 0,\quad t>0; \quad \int_{0}^{\infty}\Phi_{\alpha}(t)dt = 1.
\end{align}
Considering Lemma \ref{2_3subordinate} and the above properties (\ref{2_3proPhi}), for every $u \in X$, we have
\begin{align}\label{2_3guji}
\begin{split}
\|Su\|_{L^{2}(\Omega)} = \|S_{\alpha,\beta}u\|_{L^{2}(\Omega)} & \leq \int_{0}^{\infty} \phi_{T,\alpha}(s)\|S_{\beta}(s)u\|_{L^{2}} ds \\
& \leq C \int_{0}^{\infty} \phi_{T,\alpha}(s) ds \|u\|_{L^{2}}  \\
& = C \|u\|_{L^{2}(\Omega)},
\end{split}
\end{align}
where we used the fact that $S_{\beta}(s)$ is a bounded $C_{0}$-semigroup.

\textbf{Step 2}. Operator $S$ does not annihilate constant functions.
Assume $\|S_{\alpha,\beta} \chi_{\Omega}\|_{L^{2}(\Omega)} = 0$, that is
\begin{align}\label{2_3well2}
\int_{0}^{\infty}\phi_{T,\alpha}(s)S_{\beta}(s)\chi_{\Omega}ds = 0 \quad a.e.
\end{align}
By the definition of $\phi_{T,\alpha}(s)$, we know that
\begin{align}\label{2_3well3}
\int_{0}^{\infty} T^{-\alpha}\Phi_{\alpha}(T^{-\alpha}s) S_{\beta}(s)\chi_{\Omega}ds = 0 \quad a.e.
\end{align}
Since $\Phi_{\alpha}(s)$ is a probability density, if $S_{\beta}(s)\chi_{\Omega} \geq 0$ a.e., we can conclude that
$S_{\beta}(s)\chi_{\Omega} = 0$ a.e. for almost all $s > 0$. By the strong continuity of $C_{0}$-semigroup, we will obtain
$\lim_{s\rightarrow 0}\|S_{\beta}(s)\chi_{\Omega} - \chi_{\Omega}\|_{L^{2}(\mathbb{R}^{2})} = 0$, that is to say
$\|\chi_{\Omega}\|_{L^{2}(\mathbb{R}^{2})} = 0$ which is a contradiction.
So $\|S \chi_{\Omega}\|_{L^{2}(\Omega)} = \|S_{\alpha,\beta} \chi_{\Omega}\|_{L^{2}(\Omega)} > 0$ which
means $S\chi_{\Omega} \neq 0$ in $L^{2}(\Omega)$.

Now we verify the positive condition $S_{\beta}(s)\chi_{\Omega} \geq 0$ a.e..
Since $\chi_{\Omega} \geq 0$, we just need to verify that $S_{\beta}(s)$ is a positive
preserving operator semigroup. For more properties of positive preserving operator semigroup, we
refer to \cite{Jacob1,Arendt2011}. Here, for the reader's convenience, we give the following lemma
(Theorem 4.6.14 in \cite{Jacob1})
which is useful for our proof.
\begin{lemma}\label{2_3judgepositive}
Let $\{T(t)\}_{t\geq0}$ be a strongly continuous contraction semigroup on the space $L^{p}(\mathbb{R}^{d},\mathbb{R})$, $1 < p < \infty$,
with generator $(-A, D(A))$. The semigroup $\{T(t)\}_{t\geq0}$ is positive preserving if and only if
\begin{align}\label{2_3judgecondition}
\int_{\mathbb{R}^{d}}(-Au)(u^{+})^{p-1}dx \leq 0
\end{align}
holds for all $u \in D(A)$.
\end{lemma}
In our setting, $A = -\Delta$, $p = 2$, (\ref{2_3judgecondition}) can be verified as follows
\begin{align}\label{2_3judgeconditionooo}
\int_{\mathbb{R}^{d}}(\Delta u)u^{+}dx & = - \int_{\mathbb{R}^{d}} \nabla u \cdot \nabla u^{+}dx \\
& = - \int_{\mathbb{R}^{d}} |\nabla u^{+}|^{2}dx \leq 0,
\end{align}
where we used
\begin{align*}
\nabla u^{+} = \left\{ \begin{array}{ll}
\nabla u, & \text{a.e. on }\{ u>0 \}  \\
0, & \text{a.e. on }\{ u\leq 0 \}.
\end{array} \right.
\end{align*}
Hence, the semigroup stated in (\ref{2_3decay}) is positive preserving.
Then from the proof of Theorem 3.7 in \cite{Jia2014}, there exists a probability measures $\{\mu(t)\}_{t\geq 0}$ such that
\begin{align}\label{2_3judgeproof1}
S_{\beta}(t) = \int_{0}^{\infty} T(s)\mu(t)ds.
\end{align}
Considering (\ref{2_3judgeproof1}), the $C_{0}$-semigroup $S_{\beta}(t)$ is obviously positive preserving
that is $S_{\beta}(t)\chi_{\Omega} \geq 0$ a.e..
\end{proof}

At last, let us consider the stability of the minimizer of (\ref{2_3Question}) with respect to
the perturbations on the operator $S$ and $g^{\delta}$ for any fixed $\lambda > 0$. We state this result as follows.
\begin{theorem}\label{2_3perturbationTheorem}
Let the hypothesis in Theorem \ref{2_3mainth} be satisfied. Define two functionals
\begin{align*}
& T_{*}(u) := F(u) + \frac{\lambda}{2} \|Su - g\|_{L^{2}(\mathbb{R}^{2})}^{2},  \\
& T_{n}(u) := F(u) + \frac{\lambda}{2} \|S_{n}u - g_{n}\|_{L^{2}(\mathbb{R}^{2})}^{2}, \quad n = 1,2,\ldots
\end{align*}
and denote by
\begin{align*}
u_{*} := \argmin_{u \in \mathcal{K}}T_{*}(u), \quad u_{n} := \argmin_{u \in \mathcal{K}}T_{n}(u).
\end{align*}
If $\lim_{n\rightarrow \infty}\|g_{n} - g\|_{L^{2}(\mathbb{R}^{2})} = 0$,
$S_{n}$ converge pointwise to $S$ as $n \rightarrow \infty$,
then it follows that $u_{n} \rightharpoonup u$ as $n \rightarrow \infty$ in the weak topology of $L^{2}(\Omega)$.
\end{theorem}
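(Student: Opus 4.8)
The plan is to obtain the claim as an instance of the abstract perturbation result, Theorem \ref{2_2mainper} (which itself rests on Theorem \ref{2perturbed_th}), after recasting the constrained problem (\ref{2_3Question}) as an unconstrained one. Following Remark \ref{2_2remark1}, I would replace $F$ by $F+\frac{1}{\chi_{\mathcal{K}}}$ in both $T_{*}$ and every $T_{n}$, so that minimizing over $\mathcal{K}$ becomes minimizing over all of $L^{2}(\Omega)$; since $\mathcal{K}$ is closed and convex, the added indicator is convex, nonnegative, and weakly lower semicontinuous, and it is the \emph{same} term in $T_{*}$ and in each $T_{n}$, so it neither affects the consistency estimate nor weakens coercivity, while it supplies the strict convexity that produces the unique constrained minimizers $u_{*}$ and $u_{n}$. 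Here $d=2$, so the exponent is the critical one, $q=2=\frac{d}{d-1}$, which is precisely the regime in which Theorem \ref{2_2mainper} delivers \emph{weak} convergence $u_{n}\rightharpoonup u_{*}$, matching the assertion.

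It then remains to check the hypotheses of Theorem \ref{2_2mainper}. The convergence $\|g_{n}-g\|_{L^{2}(\mathbb{R}^{2})}\to 0$ and the pointwise convergence $S_{n}\to S$ are assumed; from the latter the uniform boundedness principle gives $\sup_{n}\|S_{n}\|=:M<\infty$, furnishing the common operator bound used in (\ref{2_2per4}). The one condition demanding genuine work is the uniform lower bound (\ref{2_2per1}), $\|S_{n}\chi_{\Omega}\|_{L^{2}}\geq\gamma>0$. Applying the pointwise convergence to the specific element $\chi_{\Omega}\in L^{2}(\Omega)$ gives $S_{n}\chi_{\Omega}\to S\chi_{\Omega}$ in $L^{2}$, hence $\|S_{n}\chi_{\Omega}\|_{L^{2}}\to\|S\chi_{\Omega}\|_{L^{2}}$. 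By Step 2 in the proof of Theorem \ref{2_3mainth} we have $\|S\chi_{\Omega}\|_{L^{2}}>0$, so there is $N_{0}$ with $\|S_{n}\chi_{\Omega}\|_{L^{2}}\geq\frac{1}{2}\|S\chi_{\Omega}\|_{L^{2}}=:\gamma>0$ for all $n\geq N_{0}$; discarding the finitely many earlier indices is harmless, since convergence of $u_{n}$ is a tail property. In particular $S_{n}\chi_{\Omega}\neq 0$ for $n\geq N_{0}$, so Theorem \ref{2_2mainthunique} guarantees that each $T_{n}$, as well as $T_{*}$ (whose operator satisfies $S\chi_{\Omega}\neq 0$ by Theorem \ref{2_3mainth}), has a unique constrained minimizer, which is what the statement silently presupposes.

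With the uniform bound $\gamma$ and the common bound $M$ in hand, uniform CBV-coercivity follows verbatim from estimate (\ref{2_2per4}), and consistency ($T_{n}\to T_{*}$ uniformly on $J$-bounded sets) follows from the final displayed inequality in the proof of Theorem \ref{2_2mainper} together with $\|g_{n}-g\|_{L^{2}}\to 0$ and $S_{n}\to S$; in the critical case the requisite weak lower semicontinuity of $F$ is Theorem 12 of \cite{var2009}, and that of each fidelity term holds because $S_{n}$ is weak-to-weak continuous and the $L^{2}$ norm is weakly lower semicontinuous. Invoking Theorem \ref{2_2mainper} in the case $q=\frac{d}{d-1}$ then yields $u_{n}\rightharpoonup u_{*}$ in $L^{2}(\Omega)$. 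I expect the main obstacle to be precisely the uniform coercivity bound (\ref{2_2per1}): it is the only place where $S_{n}\to S$ must be used quantitatively rather than qualitatively, and it is what prevents the variable-TV penalty from degenerating along the perturbed sequence; everything else is a transfer of the already-established abstract machinery to the concrete solution operator $S=S_{\alpha,\beta}$.
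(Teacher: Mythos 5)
Your proposal is correct and follows essentially the same route as the paper: invoke Theorem \ref{2_2mainper} and observe that the only condition requiring work is the uniform lower bound (\ref{2_2per1}), which follows for large $n$ from $S_{n}\chi_{\Omega}\to S\chi_{\Omega}$ in $L^{2}$ together with $\|S\chi_{\Omega}\|_{L^{2}}>0$ from Step 2 of Theorem \ref{2_3mainth}. Your additional remarks (uniform boundedness principle for a common operator bound, the indicator-function reformulation over $\mathcal{K}$, and the identification $q=2=\frac{d}{d-1}$ as the weak-convergence regime) merely make explicit details the paper leaves implicit.
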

\begin{proof}
We will use Theorem \ref{2_2mainper} to obtain the above result. Here we just need to verify condition (\ref{2_2per1}).
From the proof of Theorem \ref{2_3mainth}, we know that there exists a constant $c > 0$ such that
\begin{align}\label{2_3per1}
\|S\chi_{\Omega}\|_{L^{2}} \geq c > 0.
\end{align}
Since here we just concern the case with large enough $n$ and $S_{n}$ converge to $S$ pointwise, we can find a
positive constant $N>0$ such that if $n \geq N$ then we have
\begin{align}\label{2_3per2}
\|S_{n}\chi_{\Omega} - S\chi_{\Omega}\|_{L^{2}} \leq \frac{c}{2}.
\end{align}
From (\ref{2_3per1}) and (\ref{2_3per2}), we obtain
\begin{align}\label{2_3per3}
\|S_{n}\chi_{\Omega}\|_{L^{2}} \geq \frac{c}{2} > 0
\end{align}
for every $n \geq N$.
\end{proof}

\begin{remark}\label{2_3remarkGeneral}
For the general operator $A$ mentioned in remark \ref{jieshi}, in order to obtain the same results as in the Laplace case,
we may need to assume the function $c(\cdot)$ has a small positive bound to verify the conditions mentioned in lemma \ref{2_3judgepositive}.
In summary, the proof of Theorem \ref{2_3mainth} and Theorem \ref{2_3perturbationTheorem} just require
$-A$ generate a contraction $C_{0}$-semigroup and satisfies condition (\ref{2_3judgecondition}).
In addition, by some modifications, we may obtain similar results for space distributed order fractional
diffusion equations studied in \cite{Jia2014,JinBangti2015}. Because the modifications is not trivial, it may
be more appropriate to report in another paper.
\end{remark}

\section{Numerical Approach}
The well-posedness of our optimization problem (\ref{2_3Question}) is obtained in the previous section.
In this section, we propose modified Bregman iterative method to solve (\ref{2_3Question}) efficiently.
The Bregman distance associated with a convex functional $F(\cdot)$ between points $u$ and $v$ is defined as
\begin{align}\label{3_BregmanDistance}
D_{F}^{p}(u,v) := F(u) - F(v) - <p, u - v>,
\end{align}
where $p \in \partial F(v) = \{ w : F(u) - F(v) \geq <w, u - v>, \,\, \forall u \}$ is the sub-gradient of $F(\cdot)$
at the point $v$. Then our optimization problem (\ref{2_3Question}) becomes
\begin{align}\label{3_OptimizationDistance}
u_{*} := \argmin_{u \in \mathcal{K}} F(u_{*}) + <p, u - u_{*}> + D_{F}^{p}(u, u_{*}) +
\frac{\lambda}{2}\|Su - g^{\delta}\|_{L^{2}(\mathbb{R}^{2})}^{2}
\end{align}
with $p \in \partial F(u_{*})$.
Since the forward problem can be solved efficiently in the frequency domain, by Parseval identity, we have the
following equivalent form
\begin{align}\label{3_FreOptimizationDistance}
u_{*} := \argmin_{u \in \mathcal{K}} F(u_{*}) + <p, u - u_{*}> + D_{F}^{p}(u, u_{*}) +
\frac{\lambda}{2}\|\hat{S}\hat{u} - \hat{g}^{\delta}\|_{L^{2}(\mathbb{R}^{2})}^{2}
\end{align}
where $\hat{S}$ defined as in (\ref{1_fourierSolution}).

Instead of solving (\ref{1Question}), Osher et al.\cite{OsherBurger2005} proposed Bregman iterative
regularization to solve (\ref{3_FreOptimizationDistance}) approximately by using the following iterative formula:
\begin{align}\label{3_ApproBregman}
u^{m+1} = \argmin_{u} D^{p^{m}}_{F}(u,u^{m}) + \frac{\lambda}{2} \|\hat{S}\hat{u} - \hat{g}^{\delta}\|_{L^{2}(\mathbb{R}^{2})}^{2}
\end{align}
for $m = 0,1,2,\ldots$, beginning with $p^{0} = u^{0} = 0$.
In order to simplify the computation, we introduce
\begin{align}\label{3_redefF}
F_{m}(u) = \int_{\Omega} |\nabla u|^{\tilde{p}^{m}(x)} dx,
\end{align}
where $\tilde{p}^{m}(x) = P_{M}(|\nabla(G_{\tilde{\delta}} * u^{m})(x)|^{2})$.
Instead of $F(u)$ ($\tilde{p}$ dependent on $u$) by $F_{m}(u)$ defined above,
the Euler-Lagrange equation will be significantly simplified for each step as illustrated in (\ref{1Euler1}) and (\ref{1Euler2}).
Through this simplification, we can still capture the change of edges during evolution process
by just solving a simple Euler-Lagrange equation.

Using the definition of Bregman distance (\ref{3_BregmanDistance}), problem (\ref{3_ApproBregman}) will becomes
\begin{align}\label{3_ApproBregmanCon}
u^{m+1} = \argmin_{u} F_{m}(u) - F_{m}(u^{m}) - <p^{m}, u - u^{m}>
+ \frac{\lambda}{2} \|\hat{S}\hat{u} - \hat{g}^{\delta}\|_{L^{2}(\mathbb{R}^{2})}^{2}.
\end{align}
From the results in Section 2, it is easy to find that $u_{1}$ is well defined.
By similar arguments as deriving (3.4),(3.5) and (3.6) in \cite{WangLiu2013}, we can also
deduce that $\{ u^{m} : m \in \mathbb{N} \}$ is well defined.
Now let us firstly provide a recursive procedure which can solve (\ref{3_ApproBregman}) numerically in Algorithm 1.
\begin{algorithm}
\caption{Recursive Procedure}
\label{alg:A}
\begin{algorithmic}
\STATE {set
\begin{align*}
& (u^{0}, p^{0}) = (0,0)    \\
& F_{0}(u) = \int_{\Omega} |\nabla u|^{P_{M}(|\nabla(G_{\tilde{\delta}} * g^{\delta})(x)|^{2})} dx  \\
& u^{1} = \argmin_{u} F_{0}(u) + \frac{\lambda}{2}\|\hat{S}\hat{u} - \hat{g}^{\delta}\|_{L^{2}(\mathbb{R}^{2})}^{2}   \\
& \hat{f}^{1} = \hat{g}^{\delta} - \hat{S}\hat{u}^{1}
\end{align*}
}
\REPEAT
\STATE{
\begin{align*}
& u^{m+1} = \argmin_{u} F_{m}(u) + \frac{\lambda}{2}\|\hat{S}\hat{u} - \hat{g}^{\delta} - \hat{f}^{m}\|_{L^{2}(\mathbb{R}^{2})}^{2}   \\
& \hat{f}^{m+1} = \hat{f}^{m} + \hat{g}^{\delta} - \hat{S}\hat{u}^{m+1}
\end{align*}
}
\UNTIL{Some stopping condition is satisfied}
\end{algorithmic}
\end{algorithm}
Then, let us prove the properties of $\{u^{m}: m\in \mathbb{N}\}$ appeared in Algorithm 1
and the stoping criterion for iteration based on (\ref{3_ApproBregmanCon}).
Since the regularizing term changed during iteration in our case, the proof is more complex than
the TV regularization case.
\begin{theorem}\label{3_decrease}
For any fixed $\lambda > 0$, the data fitting error from the iteration is non-increasing, i.e.,
\begin{align}\label{3_th1de1}
\|\hat{S}\hat{u}^{m+1} - \hat{g}^{\delta}\|_{L^{2}(\mathbb{R}^{2})} \leq \|\hat{S}\hat{u}^{m} - \hat{g}^{\delta}\|_{L^{2}(\mathbb{R}^{2})},
\quad m = 1,2,\ldots
\end{align}
Moreover, it follows that
\begin{align}\label{2_th1de2}
\|\hat{S}\hat{u}^{M} - \hat{g}^{\delta}\|^{2}_{L^{2}(\mathbb{R}^{2})} \leq \frac{2}{\lambda M}(\|\nabla u_{*}\|_{L^{2}} + |\Omega|) + \delta^{2},
\end{align}
where $u_{*}$ is the minimizer of functional $\|\hat{S}\hat{u} - \hat{g}^{\delta}\|^{2}_{L^{2}(\mathbb{R}^{2})}$, $\delta > 0$
is the known error level.
\end{theorem}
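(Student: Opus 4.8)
The plan is to recast Algorithm~\ref{alg:A} into the subgradient form of Bregman iteration and then run the classical two-part argument, taking care of the fact that the penalty $F_m$ in (\ref{3_redefF}) is itself refreshed at every step. First I would record the optimality condition for the update $u^{m+1}=\argmin_u F_m(u)+\frac{\lambda}{2}\|\hat S\hat u-\hat g^\delta-\hat f^m\|^2$: working in physical space via Parseval and using that $\hat S$ is a Fourier multiplier (so $S^\ast$ is multiplication by $\overline{\hat S}$), one obtains $0\in\partial F_m(u^{m+1})+\lambda S^\ast(Su^{m+1}-g^\delta-f^m)$. Setting $p^{m+1}:=\lambda S^\ast f^{m+1}$ and using $f^{m+1}=f^m+g^\delta-Su^{m+1}$, this is exactly
\begin{align*}
p^{m+1}=p^m-\lambda S^\ast(Su^{m+1}-g^\delta),\qquad p^{m+1}\in\partial F_m(u^{m+1}),
\end{align*}
with $p^0=u^0=0$. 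Thus the scheme is a Bregman-type iteration whose subgradient at $u^{m+1}$ lives in $\partial F_m$, while the next step penalizes with $F_{m+1}$; this mismatch is the source of all the extra work.

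For the monotonicity (\ref{3_th1de1}) I would use that $u^{m+1}$ minimizes $\Phi_m(u):=F_m(u)-\langle p^m,u\rangle+\frac{\lambda}{2}\|Su-g^\delta\|^2$, which differs from (\ref{3_ApproBregmanCon}) only by a $u$-independent constant. Comparing $\Phi_m(u^{m+1})\le\Phi_m(u^m)$ and rearranging gives
\begin{align*}
\tfrac{\lambda}{2}\|Su^{m+1}-g^\delta\|^2+D^{p^m}_{F_m}(u^{m+1},u^m)\le\tfrac{\lambda}{2}\|Su^m-g^\delta\|^2 ,
\end{align*}
so (\ref{3_th1de1}) follows once the Bregman-type term is nonnegative. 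Here is the main obstacle: $p^m$ is a subgradient of $F_{m-1}$ at $u^m$, not of $F_m$, so $D^{p^m}_{F_m}(u^{m+1},u^m)\ge0$ is not automatic. I would split $D^{p^m}_{F_m}(u^{m+1},u^m)=D^{p^m}_{F_{m-1}}(u^{m+1},u^m)+\big[(F_m-F_{m-1})(u^{m+1})-(F_m-F_{m-1})(u^m)\big]$; the first summand is genuinely $\ge0$, and the bracket is controlled because every exponent $\tilde p^m(x)=P_M(|\nabla(G_{\tilde\delta}\ast u^m)|^2)$ lies in $[1,2]$ and depends smoothly and stably on $u^m$ through the $C^2$ mollifier and the $C^2$ map $P_M$ of (\ref{1PM}), so $F_{m-1}$ and $F_m$ are uniformly comparable and their difference shrinks as the residuals decrease.

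For the rate (\ref{2_th1de2}) I would telescope the quantities $D^m:=D^{p^m}_{F_{m-1}}(u_\ast,u^m)\ge0$, where $u_\ast$ is the comparison element of (\ref{2_th1de2}), for which $\|Su_\ast-g^\delta\|\le\|g-g^\delta\|\le\delta$. Substituting the subgradient update and applying Young's inequality to the cross term $\lambda\langle Su^{m+1}-g^\delta,\,Su_\ast-Su^{m+1}\rangle$ produces the standard gain $\frac{\lambda}{2}\|Su_\ast-g^\delta\|^2-\frac{\lambda}{2}\|Su^{m+1}-g^\delta\|^2$ together with a functional-change remainder $E_m:=(F_m-F_{m-1})(u_\ast)-(F_m-F_{m-1})(u^{m+1})$. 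Summing over $m=0,\dots,M-1$, discarding $D^M\ge0$, and invoking (\ref{3_th1de1}) to replace $\sum_{m=1}^M\|Su^m-g^\delta\|^2$ by $M\|Su^M-g^\delta\|^2$ yields
\begin{align*}
\|Su^M-g^\delta\|^2\le\frac{2}{\lambda M}\Big(F_0(u_\ast)+\textstyle\sum_{m}E_m\Big)+\|Su_\ast-g^\delta\|^2 .
\end{align*}
Finally I would bound $F_0(u_\ast)=\int_\Omega|\nabla u_\ast|^{\tilde p}\,dx\le|\Omega|+\|\nabla u_\ast\|_{L^2}^2$ via the elementary estimate $t^{\tilde p}\le1+t^2$ valid for $\tilde p\in[1,2]$, and use $\|Su_\ast-g^\delta\|\le\delta$, which produces exactly (\ref{2_th1de2}). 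The hard part throughout is the remainder $\sum_m E_m$ (and the analogous bracket in the monotonicity step): it must be shown bounded independently of $M$, which I would extract from the uniform $[1,2]$-range of the exponents together with the stabilization of $\tilde p^m$ forced by the decreasing residuals, so that these corrections do not spoil the $1/M$ decay.
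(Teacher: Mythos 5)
Your proposal follows essentially the same route as the paper's proof: the subgradient form $p^{m+1}=p^{m}-\lambda S^{*}(Su^{m+1}-g^{\delta})$ of the recursion, the minimality comparison $\Phi_m(u^{m+1})\le\Phi_m(u^m)$ for the monotonicity, the three-term identity for $D^{p^m}_{F_m}(u,u^m)-D^{p^{m-1}}_{F_{m-1}}(u,u^{m-1})+D^{p^{m-1}}_{F_{m-1}}(u^m,u^{m-1})$ together with the convexity estimate for the cross term $\langle u^m-u,p^m-p^{m-1}\rangle$ (this is exactly (\ref{3_th1proof2})--(\ref{3_th1proof3})), the choice $u=u_*$ with $\|Su_*-g^{\delta}\|\le\delta$, telescoping, monotonicity to replace $\sum_{m}\|Su^m-g^{\delta}\|^2$ by $M\|Su^M-g^{\delta}\|^2$, and finally $t^{\tilde p}\le 1+t^2$ for $\tilde p\in[1,2]$. (Incidentally, your bound $F_M(u_*)\le|\Omega|+\|\nabla u_*\|_{L^2}^{2}$ is the correct form; the square is missing in the paper's (\ref{2_th1de2}).)

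The one genuine gap is the remainder you yourself flag as ``the hard part,'' and it is not closed by your argument. You correctly isolate that $p^m\in\partial F_{m-1}(u^m)$ rather than $\partial F_m(u^m)$, so neither $D^{p^m}_{F_m}(u^{m+1},u^m)\ge0$ nor the disappearance of $\sum_m\bigl[(F_m-F_{m-1})(u_*)-(F_m-F_{m-1})(u^{m+1})\bigr]$ is automatic; but your proposed control is circular at the monotonicity step (the decrease of the residuals, which you invoke to argue that $\tilde p^m$ stabilizes, is precisely what is being proved), and the uniform range $\tilde p^m\in[1,2]$ bounds each $F_m-F_{m-1}$ individually without giving a sign or an $M$-independent bound on $\sum_m E_m$; that would require a quantitative stability estimate for the map $u^m\mapsto\tilde p^m$ which you do not supply. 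You should know, however, that the paper's own proof contains the identical gap and simply passes over it: the first inequality of (\ref{3_th1proof1}) silently assumes $D^{p^m}_{F_m}(u^{m+1},u^m)\ge0$, and the passage from (\ref{3_th1proof5}) to (\ref{3_th1proof6}) discards both the non-telescoping contribution $\sum_m\bigl[F_{m-1}(u^m)-F_m(u^m)\bigr]$, which has no definite sign, and the terms $D^{p^{m-1}}_{F_{m-1}}(u^m,u^{m-1})$, whose nonnegativity suffers from the same subgradient mismatch. So relative to the paper your proposal is the same proof with the missing step made explicit rather than hidden; to actually complete either version one must add a hypothesis or lemma controlling $F_m-F_{m-1}$ along the iterates.
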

\begin{proof}
Obviously, we have
\begin{align}\label{3_th1proof1}
\begin{split}
\frac{\lambda}{2}\|\hat{S}\hat{u}^{m+1} - \hat{g}^{\delta}\|_{L^{2}}^{2}
& \leq D_{F_{m}}^{p^{m}}(u^{m+1}, u^{m}) + \frac{\lambda}{2}\|\hat{S}\hat{u}^{m+1} - \hat{g}^{\delta}\|_{L^{2}}^{2}     \\
& \leq D_{F_{m}}^{p^{m}}(u^{m}, u^{m}) + \frac{\lambda}{2}\|\hat{S}\hat{u}^{m} - \hat{g}^{\delta}\|_{L^{2}}^{2} \\
& = \frac{\lambda}{2}\|\hat{S}\hat{u}^{m} - \hat{g}^{\delta}\|_{L^{2}}^{2}.
\end{split}
\end{align}
By direct computations, we can obtain
\begin{align}\label{3_th1proof2}
\begin{split}
& D_{F_{m}}^{p^{m}}(u,u^{m}) - D_{F_{m-1}}^{p^{m-1}}(u,u_{m-1}) + D_{F_{m-1}}^{p^{m-1}}(u^{m}, u^{m-1})   \\
= & F_{m}(u) - F_{m}(u^{m}) + <p^{m}, u^{m} - u> - F_{m-1}(u) + F_{m-1}(u^{m-1})    \\
& - <u_{m-1} - u, p_{m-1}> + F_{m-1}(u_{m}) - F_{m-1}(u_{m-1})  \\
& + <u_{m-1}-u_{m}, p^{m-1}>    \\
= & F_{m}(u) - F_{m}(u^{m}) - F_{m-1}(u) + F_{m-1}(u^{m}) + <u^{m} - u, p^{m} - p^{m-1}>.
\end{split}
\end{align}
Summing up the following estimates into (\ref{3_th1proof2})
\begin{align}\label{3_th1proof3}
\begin{split}
<u^{m} - u, p^{m} - p^{m-1}> & = \frac{\lambda}{2}<u^{m} - u, - \mathcal{F}^{-1}(\partial_{\hat{u}}
\|\hat{S}\hat{u} - \hat{g}^{\delta}\|_{L^{2}}^{2}|_{\hat{u} = \hat{u}^{m}})>    \\
& = \frac{\lambda}{2}<\hat{u}^{m}-\hat{u}, -\partial_{\hat{u}}
\|\hat{S}\hat{u} - \hat{g}^{\delta}\|_{L^{2}}^{2}|_{\hat{u} = \hat{u}^{m}}> \\
& \leq \frac{\lambda}{2}(\|\hat{S}\hat{u} - \hat{g}^{\delta}\|_{L^{2}}^{2} - \|\hat{S}\hat{u}^{m} - \hat{g}^{\delta}\|_{L^{2}}^{2}),
\end{split}
\end{align}
then we have
\begin{align}\label{3_th1proof4}
\begin{split}
D_{F_{m}}^{p^{m}}(u,u^{m}) & - D_{F_{m-1}}^{p^{m-1}}(u,u_{m-1}) + D_{F_{m-1}}^{p^{m-1}}(u^{m}, u^{m-1})  \\
& \leq F_{m}(u) - F_{m}(u^{m}) - F_{m-1}(u) + F_{m-1}(u^{m})    \\
& \quad +
\frac{\lambda}{2}(\|\hat{S}\hat{u} - \hat{g}^{\delta}\|_{L^{2}}^{2} - \|\hat{S}\hat{u}^{m} - \hat{g}^{\delta}\|_{L^{2}}^{2}).
\end{split}
\end{align}
Take $u = u_{*}$ in (\ref{3_th1proof4}) and rewrite it as
\begin{align}\label{3_th1proof5}
\begin{split}
D_{F_{m}}^{p^{m}}(u_{*}, u^{m}) + \frac{\lambda}{2}\|\hat{S}\hat{u}^{m} - \hat{g}^{\delta}\|_{L^{2}}^{2}
\leq & F_{m}(u) - F_{m}(u^{m}) - F_{m-1}(u)     \\
& + F_{m-1}(u^{m}) + D_{F_{m-1}}^{p^{m-1}}(u_{*}, u^{m-1})  \\
& - D_{F_{m-1}}^{p^{m-1}}(u^{m},u^{m-1}) + \frac{\lambda}{2}\delta^{2}.
\end{split}
\end{align}
Taking summation for $m = 1, \ldots, M$ yields
\begin{align}\label{3_th1proof6}
D_{F_{M}}^{p^{M}}(u_{*},u^{m}) + \frac{\lambda}{2}\sum_{m=1}^{M}\|\hat{S}\hat{u}^{m} - \hat{g}^{\delta}\|_{L^{2}}^{2}
\leq F_{M}(u_{*}) + \frac{\lambda}{2}M\delta^{2},
\end{align}
where we used $u^{0} = p^{0} = 0$.
Considering $D_{F_{M}}^{p^{M}}(u_{*},u^{m}) \geq 0$, we finally arrive at
\begin{align}\label{3_th1proof7}
\begin{split}
\|\hat{S}\hat{u}^{M} - \hat{g}^{\delta}\|_{L^{2}}^{2} \leq \frac{2}{\lambda M}(\|\nabla u_{*}\|_{L^{2}} + |\Omega|)
+ \delta^{2}.
\end{split}
\end{align}
\end{proof}

\begin{theorem}\label{3_stopth}
Assume that $u$ is the exact initial distribution. The optimal strategy for regularizing parameters $(\lambda, M)$
is that
\begin{align}\label{3_th21}
\frac{2}{\lambda M}(\|\nabla u_{*}\|_{L^{2}} + |\Omega|) = \delta^{2}.
\end{align}
For such a strategy, we have the optimization convergence rate
\begin{align}\label{3_th22}
\|\hat{S}(\hat{u}^{M} - \hat{u})\|_{L^{2}(\mathbb{R}^{2})} \leq (\sqrt{2} + 1)\delta \quad \text{as } \delta \rightarrow 0.
\end{align}
\end{theorem}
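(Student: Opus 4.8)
The plan is to read off the rate (\ref{3_th22}) directly from the discrepancy estimate (\ref{2_th1de2}) of Theorem \ref{3_decrease}, using only the triangle inequality, Parseval's identity, and the exact forward relation in frequency. The starting observation is that the \emph{exact} data obeys $\hat{g}(\xi) = \hat{S}(\xi)\hat{u}(\xi)$ by (\ref{1_fourierSolution}), where $u$ is the exact initial distribution; thus $\hat{S}\hat{u} = \hat{g}$, and the target quantity decomposes as
\begin{align*}
\|\hat{S}(\hat{u}^{M} - \hat{u})\|_{L^{2}(\mathbb{R}^{2})}
= \|\hat{S}\hat{u}^{M} - \hat{g}\|_{L^{2}(\mathbb{R}^{2})}
\leq \|\hat{S}\hat{u}^{M} - \hat{g}^{\delta}\|_{L^{2}(\mathbb{R}^{2})} + \|\hat{g}^{\delta} - \hat{g}\|_{L^{2}(\mathbb{R}^{2})}.
\end{align*}
It then suffices to bound the two terms on the right by $\sqrt{2}\,\delta$ and $\delta$ respectively.

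First I would handle the data-perturbation term. Because $\|g^{\delta} - g\|_{L^{2}(\mathbb{R}^{2}\backslash\Omega)} = 0$ by the extension convention (\ref{1_dataoutOmega}) and $\|g^{\delta} - g\|_{L^{2}(\Omega)} \leq \delta$ by the noise level (\ref{1_noiseLevel}), one gets $\|g^{\delta} - g\|_{L^{2}(\mathbb{R}^{2})} \leq \delta$, and Parseval's identity carries this bound into the frequency domain, giving $\|\hat{g}^{\delta} - \hat{g}\|_{L^{2}(\mathbb{R}^{2})} \leq \delta$. For the residual term I would invoke Theorem \ref{3_decrease}: substituting the parameter rule (\ref{3_th21}) into (\ref{2_th1de2}) collapses its right-hand side to $\delta^{2} + \delta^{2} = 2\delta^{2}$, whence $\|\hat{S}\hat{u}^{M} - \hat{g}^{\delta}\|_{L^{2}(\mathbb{R}^{2})} \leq \sqrt{2}\,\delta$. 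Summing the two estimates yields $\|\hat{S}(\hat{u}^{M} - \hat{u})\|_{L^{2}(\mathbb{R}^{2})} \leq (\sqrt{2}+1)\delta$, which vanishes as $\delta \rightarrow 0$; this is exactly (\ref{3_th22}).

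Since Theorem \ref{3_decrease} does the heavy lifting, the analytic content here is light, and the only points deserving care are the Parseval bookkeeping (which relies precisely on the convention (\ref{1_dataoutOmega}) so that the spatial residual is supported in $\Omega$) and the justification that (\ref{3_th21}) really is the \emph{optimal} balance. For the latter I would reason in the spirit of Morozov's discrepancy principle: the right-hand side of (\ref{2_th1de2}) is the sum of a controllable term $\frac{2}{\lambda M}(\|\nabla u_{*}\|_{L^{2}} + |\Omega|)$, which shrinks as the product $\lambda M$ grows, and an irreducible noise floor $\delta^{2}$. Equating the two, as in (\ref{3_th21}), is the crossover point: pushing $\lambda M$ larger would drive the guaranteed residual below the noise level and hence force the iterates to fit noise, while a smaller $\lambda M$ leaves the regularization term dominant. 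This rule forces $\lambda M \sim \delta^{-2}$ as $\delta \rightarrow 0$, and I expect the main (and only mildly delicate) obstacle to be stating this optimality claim rigorously rather than heuristically, since the convergence rate itself follows immediately from the chain of inequalities above.
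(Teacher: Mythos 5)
Your proposal is correct and follows essentially the same route as the paper: the triangle inequality $\|\hat{S}(\hat{u}^{M}-\hat{u})\|_{L^{2}} \leq \|\hat{S}\hat{u}^{M}-\hat{g}^{\delta}\|_{L^{2}} + \|\hat{S}\hat{u}-\hat{g}^{\delta}\|_{L^{2}}$, the bound $\|\hat{S}\hat{u}-\hat{g}^{\delta}\|_{L^{2}} \leq \delta$ from the noise level, and the substitution of (\ref{3_th21}) into (\ref{2_th1de2}) to get $\sqrt{2}\,\delta$ for the residual. Your added remarks on the Parseval bookkeeping and the Morozov-type balancing heuristic are consistent with, but go beyond, the paper's terse two-line proof.
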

\begin{proof}
The results can be obtained by using Theorem \ref{3_decrease} and the following estimate
\begin{align*}
\|\hat{S}(\hat{u}^{M} - \hat{u})\|_{L^{2}}
\leq \|\hat{S}\hat{u}^{M} - \hat{g}^{\delta}\|_{L^{2}} + \|\hat{S}\hat{u} - \hat{g}^{\delta}\|_{L^{2}}
\leq \|\hat{S}\hat{u}^{M} - \hat{g}^{\delta}\|_{L^{2}} + \delta.
\end{align*}
\end{proof}

Since $\|\nabla u_{*}\|_{L^{2}}$ is unknown, the strategy (\ref{3_th21}) can not be implemented numerically
for choosing $(\lambda, M)$. An implementable scheme is that
we fix $\lambda$, then choosing the iteration stopping value $M$ such that
\begin{align}\label{3_practicalStop}
\|\hat{S}\hat{u}^{M} - \hat{g}^{\delta}\|_{L^{2}(\mathbb{R}^{2})} \leq \tau \delta
\end{align}
is satisfied first time for some specified $\tau > 1$.


For our problem, we must consider how to evaluate $\tilde{p}(x)$ during each iteration.
In order to simplify the computation and reduce the number of parameters needed to be specify,
here we use a surrogate for
$\tilde{p}(x) = P_{M}(|\nabla(G_{\tilde{\delta}} * u)(x)|^{2})$.
The essential idea of $\tilde{p}(x)$ is that its value is $1$ near the edges and its value is $2$
away from the edges. So we can using some simple algorithm to detect the edges firstly, then
based on the estimated edges build our exponent $\tilde{p}(x)$.
Denote $u^{m}$ to be the input of the $(m+1)$s iterate, we need to specify $\tilde{p}(x)$ from $u^{m}$.
Using some simple edge detection algorithm, we will obtain the estimated edges denoted as $E(u^{m})$ which
is an image with value $1$ on the detected edges and with value $0$ away from the edges.
Then, we define
\begin{align}\label{3_detectEdges}
\tilde{p}^{m}(x) = 2 I - G_{\tilde{\delta}} * E(u^{m})
\end{align}
where $I$ is the matrix which has the same dimension as $u^{m}$ and each element of $I$ is equal to $1$,
$G_{\tilde{\delta}}$ defined as in (\ref{1defp}) is a smoothing kernel.

Now we are ready to consider the algorithm for solving
\begin{align}\label{3_inarg0}
u^{m+1} = \argmin_{u} F_{m}(u) + \frac{\lambda}{2}\|\hat{S}\hat{u} - \hat{g}^{m}\|_{L^{2}}^{2}
\end{align}
with $\hat{g}^{m} = \hat{g}^{\delta} + \hat{f}^{m}$ for $m = 0,1,\ldots$.
Firstly, we introduce a new function $w$ to represent the gradient term $\nabla u$ in optimization problem (\ref{3_inarg0}),
which generates an equivalent constrained convex optimization problem:
\begin{align}\label{3_inarg1}
\min_{u,w} \left\{ \int_{\Omega} |w|^{\tilde{p}^{m}(x)} dx + \frac{\lambda}{2} \|\hat{S}\hat{u} - \hat{g}^{m}\|_{L^{2}}^{2} \right\}
\,\, \text{such that}\,\, w(x) = \nabla u(x).
\end{align}

Secondly, we split the domain $\Omega$ into three parts, for some small constant $\epsilon > 0$,
\begin{align}\label{3_inarg2}
\begin{split}
& \Omega_{1} := \{ x \in \Omega \, : \, 1 \leq \tilde{p}^{m}(x) < 1+ \epsilon \},   \\
& \Omega_{2} := \{ x \in \Omega \, : \, 1 + \epsilon \leq \tilde{p}^{m}(x) \leq 2 - \epsilon \},    \\
& \Omega_{3} := \{ x \in \Omega \, : \, 2 - \epsilon < \tilde{p}^{m}(x) \leq 2 \}.
\end{split}
\end{align}
Approximately, we can take $\tilde{p}(x) = 1$ on $\Omega_{1}$ and $\tilde{p}(x) = 2$ on $\Omega_{3}$.
Hence, we can rewrite (\ref{3_inarg1}) as follows
\begin{align}\label{3_inarg3}
\begin{split}
& \min_{u,w} \left\{ \int_{\Omega_{1}} |w| dx + \int_{\Omega_{2}} |w|^{\tilde{p}^{m}(x)} dx + \int_{\Omega_{3}} |w|^{2} dx
+ \frac{\lambda}{2} \|\hat{S}\hat{u} - \hat{g}^{m}\|_{L^{2}}^{2} \right\} \\
& \text{such that }w(x) = \nabla u(x).
\end{split}
\end{align}
Based on the domain decomposition, we define $w_{1} := w|_{\Omega_{1}}$, $w_{2} := w|_{\Omega_{2}}$ and $w_{3} := w|_{\Omega_{3}}$.

Thirdly, by using the splitting technique, we construct an iterative procedure of alternately solving a pair of easy subproblems.
The first three subproblems can be called '$w$-subproblem' for fixed $u = u^{*}$:
\begin{align}
& \argmin_{w_{1}}  \left\{ \int_{\Omega_{1}} |w_{1}| dx + \frac{\tilde{\lambda}}{2} \|w_{1} - \nabla u^{*}\|_{L^{2}(\Omega_{1})}^{2} \right\},
\label{3_subproblem1} \\
& \argmin_{w_{2}}  \left\{ \int_{\Omega_{2}} |w_{2}|^{\tilde{p}^{m}(x)} dx + \frac{\tilde{\lambda}}{2} \|w_{2} - \nabla u^{*}\|_{L^{2}(\Omega_{2})}^{2} \right\}, \label{3_subproblem2}     \\
& \argmin_{w_{3}}  \left\{ \int_{\Omega_{3}} |w_{3}|^{2} dx + \frac{\tilde{\lambda}}{2} \|w_{3} - \nabla u^{*}\|_{L^{2}(\Omega_{3})}^{2} \right\}.
\label{3_subproblem3}
\end{align}
The last subproblem is the '$u$-subproblem' for fixed $w = w^{*}$:
\begin{align}\label{3_subproblem4}
\argmin_{u} \left\{ \frac{\lambda}{2}\|\hat{S}\hat{u} - \hat{g}^{m}\|_{L^{2}(\Omega)}^{2}
+ \frac{\tilde{\lambda}}{2}\|w^{*} - \nabla u\|_{L^{2}(\Omega)}^{2} \right\}.
\end{align}

Using the definition of Frechet derivatives and standard computations,
we can easily obtain the minimizer of the subproblems (\ref{3_subproblem1}), (\ref{3_subproblem3}) and (\ref{3_subproblem4}).
Because the deduction is standard, we omit the details and just give the results as follows:
\begin{align}\label{3_subsolution1}
\begin{split}
w_{1}[u^{*}](x) = \left\{ \begin{array}{ll}
0 & x \notin \Omega_{1} \\
0 & x \in \Omega_{1} \text{ and } |\nabla u^{*}(x)| \leq \frac{1}{\tilde{\lambda}} \\
\left( |\nabla u^{*}(x)| - \frac{1}{\tilde{\lambda}} \right) \frac{\nabla u^{*}(x)}{|\nabla u^{*}(x)|}
& x \in \Omega_{1} \text{ and } |\nabla u^{*}(x)| > \frac{1}{\tilde{\lambda}}
\end{array} \right. ,
\end{split}
\end{align}
\begin{align}\label{3_subsolution3}
\begin{split}
w_{3}[u^{*}](x) = \left\{ \begin{array}{ll}
0 & x \notin \Omega_{3} \\
\frac{\nabla u^{*}(x)}{\tilde{\lambda} + 2} & x \in \Omega_{3}
\end{array} \right. ,
\end{split}
\end{align}
\begin{align}\label{3_subsolution4}
\hat{u}[w^{*}, g^{m}](\xi) = \frac{\lambda\hat{S}(\xi)\hat{g}^{m}(\xi) - i\tilde{\lambda}\xi\cdot\hat{w}^{*}}{\lambda \hat{S}^{2}(\xi)
+ \tilde{\lambda} |\xi|^{2}}.
\end{align}

For subproblem (\ref{3_subproblem2}), by a simple calculation, we can obtain the Euler-Lagrange equation
\begin{align}\label{3_sub2EulerLagrange}
\begin{split}
0 = -\nabla \cdot \left( \frac{w_{2}}{|w_{2}|} \tilde{p}(x) |w_{2}|^{\tilde{p}(x) - 1} \right) + \tilde{\lambda}(w_{2} - \nabla u^{*})
\end{split}
\end{align}
Denote $$J_{sub2}(w_{2}) := \int_{\Omega_{2}} |w_{2}|^{\tilde{p}^{m}(x)} dx +
\frac{\tilde{\lambda}}{2} \|w_{2} - \nabla u^{*}\|_{L^{2}(\Omega_{2})}^{2}.$$
Taking $\tilde{g}^{m}_{ij}$ to be a standard finite difference approximation of the right hand side of (\ref{3_sub2EulerLagrange})
at $x_{i,j}$ and $t_{m}$, we get an Euler-like updating scheme
\begin{align}\label{3_sub2ite}
w_{2\, i,j}^{m+1}[u^{*}] = w_{2\, i,j}^{m}[u^{*}] - \Delta t \tilde{g}_{i,j}^{m}.
\end{align}
Here we use an adaptive step size scheme. The new value $w_{2\, i,j}^{m+1}[u^{*}]$ is accepted for each step
in which the cost is improved, $J_{sub2}(w_{2\, i,j}^{m+1}[u^{*}]) < J_{sub2}(w_{2\, i,j}^{m}[u^{*}])$, and the step $\Delta t$
is increased by a factor $\Delta t \rightarrow (1+s)\Delta t$, $s > 0$. For each unsuccessful step where
$J_{sub2}(w_{2\, i,j}^{m+1}[u^{*}]) \geq J_{sub2}(w_{2\, i,j}^{m}[u^{*}])$, the trial step is not used, and the step size is
decreased, $\Delta t \rightarrow (1-s)\Delta t$.

In order to solve optimization problem (\ref{3_inarg1}) by solving subproblems from (\ref{3_subproblem1}) to (\ref{3_subproblem4}),
we need to solve subproblems from (\ref{3_subproblem1}) to (\ref{3_subproblem4}) iteratively with may times to obtain
an accurate solution. However, as mentioned in \cite{GoldsteinOsher2009}, we actually only need to solve
these subproblems with few iterations. Hence, we may not need to solve subproblem (\ref{3_subproblem2}) with very
high accuracy. That is to say we can run the iterative procedure (\ref{3_sub2ite}) with few steps.

At last, we state the discrete version of gradient operator and frequency operation.
For a function $f$, the discrete version of $\nabla f$ is $(\nabla f)_{i,j} := ((\nabla f)_{i,j}^{1}, (\nabla f)_{i,j}^{2})$ with
\begin{align*}
(\nabla f)_{i,j}^{1} = \left\{ \begin{array}{ll}
\frac{f_{i+1,j} - f_{i,j}}{\delta x_{1}} & 1 \leq i < N \\
0 & i = N
\end{array} \right. , \quad
(\nabla f)_{i,j}^{2} = \left\{ \begin{array}{ll}
\frac{f_{i,j+1} - f_{i,j}}{\delta x_{2}} & 1 \leq j < N \\
0 & j = N.
\end{array} \right.
\end{align*}
For simplicity, we assume that $\Omega := [-L,L]\times [-L,L]$ is a square, which yields that
$\delta x_{1} = \delta x_{2} := \delta x = \frac{2L}{N}$.
Then from Shannon-Nyquist sampling principle, the maximum frequency from the spatial grids is $[-\Omega_{0}, \Omega_{0}]$ with
\begin{align*}
\Omega_{0} = \frac{2\pi}{\delta x} = \frac{\pi N}{L}.
\end{align*}
We can compute the discrete Fourier transform in $[-\Omega_{0}, \Omega_{0}] \times [-\Omega_{0}, \Omega_{0}]$ with
uniform frequency distribution $\{\xi_{m,n}: m,n = 1,2,\ldots,N\}$.

Under these considerations, the iterative scheme for solving the optimization problem for the backward
time-space fractional diffusion model can be implemented by the Bregman iterative algorithm with some modifications.
For the details, see Algorithm 2.
\begin{algorithm}
\caption{Modified Bregman Iteration Algorithm}
\label{alg:B}
\begin{algorithmic}
\REQUIRE
$g^{\delta}, \delta, \lambda, \tilde{\lambda}, \tau, m_{\text{max}}, k_{\text{max}}, \ell_{\text{max}},
G_{\tilde{\delta}}, E(\cdot) \text{ is some edge detector},\Delta t, \epsilon, s, \text{tol}$
\STATE {set $u^{0} = p^{0} = 0, g^{0} = g^{\delta}, m = 0$}
\WHILE{$\|\hat{S}\hat{u}^{m} - \hat{g}^{\delta}\|_{L^{2}} \geq \tau \delta$ and $m \leq m_{\text{max}}$}
\STATE $u^{m,0} = u^{m} \quad \tilde{p}^{m} = 2I - G_{\tilde{\delta}} * E(u^{m})$
\FOR{$k=0$ to $k_{\text{max}}$}
\STATE $w_{1}^{k+1}\longleftarrow w_{1}[u^{m,k}]$
\WHILE{$\ell \leq \ell_{\text{max}}$ and $\|w_{2}^{\ell + 1} - w_{2}^{\ell}\|_{L^{2}} > \text{tol}$}
\STATE $\tilde{g}^{\ell} = -\nabla \cdot \left( \frac{w^{\ell}_{2}}{|w^{\ell}_{2}|} \tilde{p}^{m}
|w^{\ell}_{2}|^{\tilde{p}^{m} - 1} \right) + \tilde{\lambda}(w^{\ell}_{2} - \nabla u^{m,k})$
\IF{$J_{sub2}(w_{2}^{\ell}) > J_{sub2}(w_{2}^{\ell} - \Delta t \tilde{g}^{\ell})$}
\STATE $w_{2}^{\ell + 1} \longleftarrow w_{2}^{\ell} - \Delta t \tilde{g}^{\ell}$, $\Delta t \longleftarrow (1+s)\Delta t$
\ELSE
\STATE $\Delta t \longleftarrow (1-s)\Delta t$
\ENDIF
\ENDWHILE
\STATE $w_{3}^{k+1} \longleftarrow w_{2}[u^{m,k}]$, $w^{k+1} \longleftarrow w^{k+1}_{1} + w^{k+1}_{2} + w^{k+1}_{3}$
\STATE $u^{m,k+1} \longleftarrow u[w^{k+1}, \hat{g}^{m}]$
\ENDFOR
\STATE $u^{m+1} \longleftarrow u^{m,k+1}$, $\hat{g}^{m+1} \longleftarrow \hat{g}^{m} + (\hat{g}^{\delta} - \hat{S}\hat{u}^{m+1})$
\ENDWHILE
\ENSURE
$u^{m+1}$
\end{algorithmic}
\end{algorithm}

\section{Numerical Examples}

In this section, we consider two typical examples. In these two examples, we will compare
our results with TV regularizing and Tikhonov regularizing model. Here we first list the two models as follows
\begin{align}
& u_{TV} = \argmin_{u} \left\{ \|u\|_{TV} + \frac{\lambda}{2} \|\hat{S}\hat{u} - \hat{g}^{\delta}\|_{L^{2}(\mathbb{R}^{2})}^{2} \right\},
\label{4_TVmodel}  \\
& u_{Tik} = \argmin_{u} \left\{ \|\nabla u\|^{2}_{L^{2}} + \frac{\lambda}{2} \|\hat{S}\hat{u} - \hat{g}^{\delta}\|_{L^{2}(\mathbb{R}^{2})}^{2} \right\},
\label{4_TikModel}
\end{align}
where $\lambda$ is the regularization parameter, $g^{\delta}$ is the measured data with noise.
For the the TV regularizing model, we refer to \cite{OsherBurger2005} which described clearly how to solve
TV regularization model. For Tikhonov regularization model, it can be solved just by a small modification
of algorithm stated in \cite{OsherBurger2005}. More explicitly, we just need to change the Euler-Lagrange equation of problem
(\ref{4_TVmodel}) by the Euler-Lagrange equation of problem (\ref{4_TikModel}).

Here, we specify some parameters used in our implementation.
In our examples, we take parameters in Algorithm 2 as follows
\begin{align*}
& \tilde{\delta} = 0.4, \quad \tau = 1.01, \quad m_{\text{max}} = 500, \quad
k_{\text{max}} = 2, \quad \ell_{\text{max}} = 5, \quad \text{tol} = 10^{-6},    \\
& s = 0.1, \quad \Delta t = 0.1, \quad \epsilon = 0.1,  \\
& E(\cdot) \text{ to be the Canny edge detection algorithm in the Matlab toolbox}.
\end{align*}
For the noise, we take $\delta = 0.0005$ and $\delta = 0.005$ respectively.
Because the value of $\lambda$ can determine the convergence rate of our algorithm, we take different $\lambda$
for different noise level. If we take $\lambda$ too big, $\|Su_{1} - \hat{g}^{\delta}\|_{L^{2}}$ may less
than $\tau \delta$ when the first iteration finished. In this case, we may incorporate more noise in our result $u_{1}$.
If we take $\lambda$ too small, the iteration will converge too slow to obtain our final results, e.g. exceed $500$ steps.
For $\delta = 0.0005$ and $\delta = 0.005$,
we take $\lambda = 10^{11}$ and $\lambda = 10^{9}$ respectively in our numerical experiments.

In order to avoid the error in solving the forward fractional differential equation, we solve (\ref{1_timespacefrac}) to obtain
the solution at time $T$ using the Laplace transform
\begin{align*}
\hat{v}(T,\xi) = E_{\alpha,1}(-|\xi|^{\beta}T^{\alpha})\hat{u}(\xi),
\end{align*}
where the Mittag-Leffler function $E_{\alpha,1}(\cdot)$ is numerically calculated up to desired accuracy by
standard algorithm provided by Podlubny \cite{PodlubnyMatlab}.

Denote $x = (x_{1},x_{2}) \in \mathbb{R}^{2}$ and $\xi = (\xi_{1},\xi_{2}) \in \mathbb{R}^{2}$.
We generate the final measurement data with noise by
\begin{align*}
g^{\delta} := v^{\delta}(T,x) = \mathcal{F}^{-1}(\hat{v}(T,\xi)) + \delta \cdot \text{randn}(x) \cdot \max(\mathcal{F}^{-1}(\hat{v}(T,\xi))),
\end{align*}
where randn is the pseudo-random number generating from the standard normal distribution.
Notice that here we add noise as in \cite{BuiThanhGhattas2015} where the $\delta$ stands for the noise level is $100 \times \delta$,
e.g. when $\delta = 0.05$ the noise level is $5\%$.

In our discretization, we discrete $\Omega = [-10,10]^{2}$, the support of $u(x)$, by uniform grids
$(x_{1}(i),x_{2}(j)) \in [-10,10]^2$ with $i,j = 1,\ldots,256$.

We use relative error (RelErr) to quantitatively compare our solution with those based on
TV regularization and Tikhonov regularization. For given finite dimensional vectors $g$ and its
noisy form $g^{\delta}$ representing the image, the above RelErr has the representation
\begin{align}\label{xiangdui}
\text{RelErr}(g^{\delta},g) := \frac{\|g^{\delta} - g\|_{L^{2}(\Omega)}}{\|g\|_{L^{2}(\Omega)}} \times 100\%.
\end{align}

\textbf{Example 1.} We consider $u(x) = e^{-|x|^{2}}$, $T = 1$.
In this case, the exact solution has the following form
\begin{align}\label{4_solexp}
\hat{v}(T,\xi) = \pi E_{\alpha,1}(-|\xi|^{\beta}T^{\alpha})\hat{u}(\xi) e^{-|\xi|^{2}}.
\end{align}
We take $\alpha = 0.6$, $\beta = 1$ and $T = 1$ to see the difference between the three different models.
In table 1, relative error defined in (\ref{xiangdui}) for three different methods are presented.
Because the noise added by random algorithms, we run the three different algorithms 100 times and the data are the
averages.
\begin{center}
\begin{table}[htbp]\label{biao1}
\caption{The values of RelErr of three methods for Example 1}
\begin{tabular}{c|c|c|c}
  \hline
  RelErr  & TV model & Tikhonov model & Variable TV model  \\
  \hline
  $\sigma = 0.0005$ & $3.8283\%$ & $0.3857\%$ & $0.3696\%$ \\
  $\sigma = 0.005$ & $8.8646\%$ & $0.6559\%$ & $0.6597\%$ \\
  \hline
\end{tabular}
\end{table}
\end{center}
From table 1, we could clearly know that TV model's performance is much weaker than the Tikhonov model
and our variable TV model's performance is comparable to the Tikhonov model.
Because the differences for recovered functions obtained by different methods can not see clearly form the figures of the recovered function,
we will not provide the comparison figures for the recovered function and only provide the original data,
recovered data with $\delta = 0.0005$ and $\delta = 0.005$ in figure \ref{exp1tu} which show that the recovered data
have no visual difference with the original data.
\begin{figure}[htbp]
\centering
\includegraphics[width=1\textwidth]{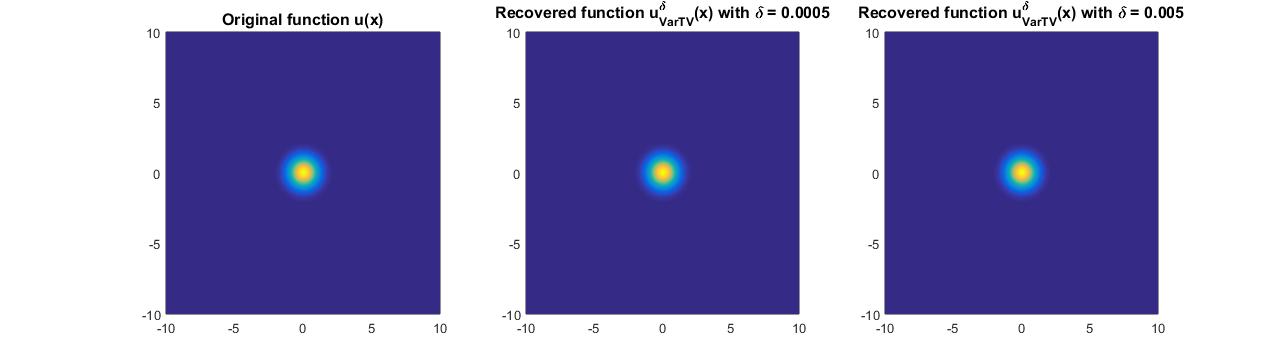}
\caption{Left:~Original function; Middle:~Recovered function by variable TV model with $\delta = 0.0005$;
Right:~Recovered function by variable TV model with $\delta = 0.005$ for Example 1.}\label{exp1tu}
\end{figure}

\textbf{Example 2.} Consider a phantom model generated by standard function phantom.m in Matlab with defalut
parameters. We use the gray level (piecewise constant) of this image as the values of $u(x)$, see figure \ref{4_2originalhou}.
\begin{figure}[htbp]
\centering
\includegraphics[width=0.25\textwidth]{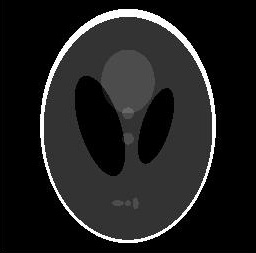}
\caption{Initial function for Example 2.}\label{4_2originalhou}
\end{figure}
In this example, we take $\alpha = 0.6$ and $\beta = 0.9$ and $N = 256$.
In the following, we provide table 2 to present the performance of the three different models.
As in example 1, we also run the three different algorithms 100 times and the data in table 2 are the averages.
Table 2 demonstrate that the TV model's performance is better than Tikhonov model when the initial data is
a piecewise constant function. Our variable TV model as expected preform comparable to the TV model.
Hence, example 1 and example 2 reflect that our model can change the value of $\tilde{p}$ and the algorithm proposed in
section 3 can solve our variable TV regularization model effectively.
Based on same considerations as stated in example 1, we will not present the three different figures of the recovered function and
only present the recovered functions of our variable TV model in figure \ref{exp2tu} which show that
the recovered functions are much similar to the original data.
\begin{center}
\begin{table}[htbp]\label{biao2}
\caption{The values of RelErr of three methods for Example 2}
\begin{tabular}{c|c|c|c}
  \hline
  RelErr  & TV model & Tikhonov model & Variable TV model  \\
  \hline
  $\sigma = 0.0005$ & $13.0053\%$ & $13.7772\%$ & $13.0666\%$ \\
  $\sigma = 0.005$ & $22.7222\%$ & $25.2101\%$ & $22.7810\%$ \\
  \hline
\end{tabular}
\end{table}
\end{center}
\begin{figure}[htbp]
\centering
\includegraphics[width=0.9\textwidth]{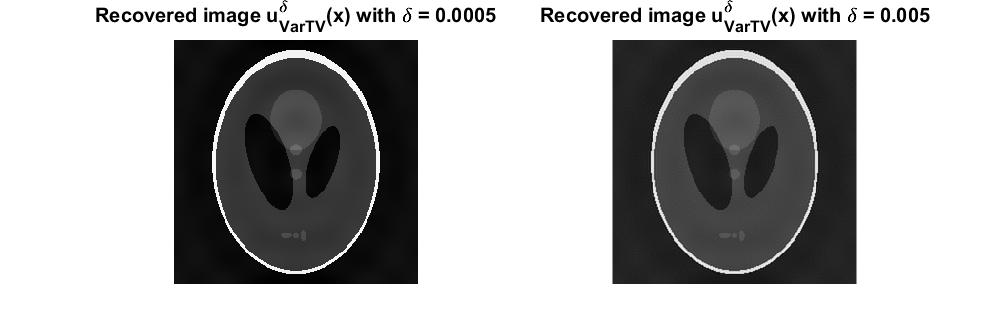}
\caption{Left:~Recovered function by variable TV model with $\delta = 0.0005$ for Example 2;
Right:~Recovered function by the variable TV model with $\delta = 0.005$ for Example 2.}\label{exp2tu}
\end{figure}

Now, we provide a simple verification of our theoretical results.
Here, we take $\delta = 0.0005$.
From theorem \ref{3_stopth}, we know that
\begin{align*}
\lambda M = \frac{2}{\delta^{2}}(\|\nabla u_{*}\|_{L^{2}} + |\Omega|) = C
\end{align*}
for some unknown constant C.
If we take $\lambda = 10^{11}, \frac{1}{4}\times 10^{11}, \frac{1}{16} \times 10^{11}$ respectively.
We run our program by taking $\tau = 1.01$ and using our stop criterion (\ref{3_practicalStop})
to obtain the iterative step $M = 10$ when $\lambda = 10^{11}$.
If $M = 10$ is accurate, $M$ should be equal to $40$ and $160$ when $\lambda = \frac{1}{4}\times 10^{11}$
and $\frac{1}{16} \times 10^{11}$ respectively according to our theory.
We run our program and obtain $M$ equal to $38$ and $167$
when $\lambda = \frac{1}{16}\times 10^{11}$ and $\lambda = \frac{1}{4}\times 10^{11}$ respectively.
We can see that it is almost the same as the predicted by the theoretical results which shows that
our program is right and in accordance with our theories.
Here is the result after run our algorithm once, each time the result will be a little different for
the noise is added randomly.

After theoretical justifications, we want to clarify an interesting phenomena which reveals
some essential different properties of the inverse problems for integer-order differential equations
and fractional-order differential equations.

\textbf{Discontinuous for normal and anomalous diffusion.}
In this part, we also use the phantom model generated by standard function phantom.m in Matlab with defalut
parameters (same as in example 2) as our initial data then
take $T = 1$, $\sigma = 0.0005$, $\beta = 1$ and the time derivative $\alpha = 0.5, \ldots, 1$.
In order to provide a clear explanation, we take $100$ points between $[0.5,1]$ for $\alpha$.
Then we use our variable TV regularizing model to recover the true initial data and plot the RelErr value
for each $\alpha$ in figure \ref{4_2TimeFracDis}.
\begin{figure}[htbp]
\centering
\includegraphics[width=0.6\textwidth]{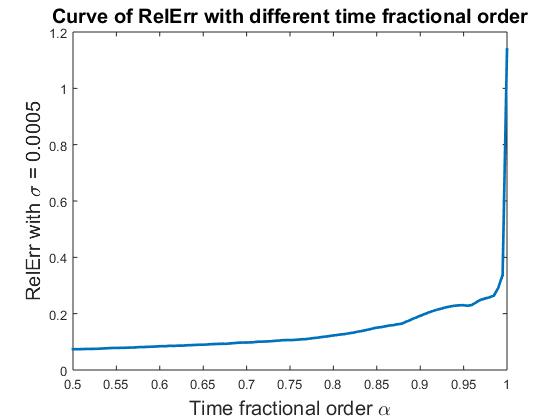}
\caption{The curve of the relative error of the recovered data for different values of parameter $\alpha$.}\label{4_2TimeFracDis}
\end{figure}
Because we used the same model, the degree of ill-posedness intuitively can be represented by the RelErr value.
Small RelErr value indicate that our model can provide a good result, hence, the degree of ill-posedness is weak.
In contradict, large RelErr value indicate that the degree of ill-posedness is strong.
From figure \ref{4_2TimeFracDis}, we clearly find that even for $\alpha = 0.99$ the degree of ill-posedness
is much weaker than the integer-order equation. This implies that for $\alpha < 1$ the degree of ill-posedness
varies continuously, however, for $\alpha = 1$ the degree of ill-posedness is much higher than any value of $\alpha < 1$.
The degree of ill-posedness may not change continuously at the point $1$.
This observation may be explained by the properties of Mittag-Leffler function $E_{\alpha,1}(z)$. For $\alpha = 1$,
it is an exponential function, however, for any value $\alpha < 1$ the Mittag-Leffler function behaves like polynomial
functions for large $z$ (Theorem 1.3 in \cite{Podlubny1999}).
This property also be observed in \cite{WangGaoZhou2013} which propose
a fractional extension of instantaneous frequency attribute to detect thin layers of sandstone formations.
They use fractional order of $0.99$ and illustrate only $0.01$ smaller than integer-order $1$ will
bring very different results.

From the above two typical examples, it is obviously that our algorithm behaves like $L^{2}$ based Tikhonov regularizing
model when the initial function is smooth and behaves like $TV$ regularizing model when the initial function is piecewise constant.
Hence, our model has more flexibility compared with Tikhonov regularizing model and $TV$ regularizing model.

\section{Acknowledgements}
J.~Gao was supported partially by the National Natural Science Foundation of China
under grant no. 41390454.
J.~Jia was supported by the National Natural Science Foundation of China under grant no. 11501439 and
the postdoctoral science foundation project of China under grant no. 2015M580826.
J.~Peng was supported partially by National Natural Science Foundation of China under grant no. 11131006 and by the National Basic Research Program of China under grant no. 2013CB329404.


\begin{thebibliography}{1}


\bibitem{Metzler2000}Metzler, R., and Klafter, J. (2000). The random walk's guide to anomalous diffusion: a fractional dynamics approach. Physics reports, 339(1), 1-77.

\bibitem{Zaslavsky2002}Zaslavsky, G. M. (2002). Chaos, fractional kinetics, and anomalous transport. Physics Reports, 371(6), 461-580.

\bibitem{Meerschaert2011}Meerschaert, M. M., and Sikorskii, A. (2011). Stochastic models for fractional calculus (Vol. 43). Walter de Gruyter.

\bibitem{Peng2012}Peng, J., and Li, K. (2012). A novel characteristic of solution operator for the fractional abstract Cauchy problem. Journal of Mathematical Analysis and Applications, 385(2), 786-796.

\bibitem{LiChen2010}Li, M., Chen, C., and Li, F. B. (2010). On fractional powers of generators of fractional resolvent families. Journal of Functional Analysis, 259(10), 2702-2726.

\bibitem{bea0}Baeumer, B., Kurita, S., \& Meerschaert, M. (2005). Inhomogeneous fractional diffusion equations. Fractional Calculus and Applied Analysis, 8(4), 371-386.

\bibitem{bea1}Baeumer, B., Meerschaert, M. M., \& Nane, E. (2009). Space¨Ctime duality for fractional diffusion. Journal of Applied Probability, 1100-1115.

\bibitem{bea2}Baeumer, B., Kov\'{a}cs, M., \& Sankaranarayanan, H. (2015). Higher order Gr¨¹nwald approximations of fractional derivatives and fractional powers of operators. Transactions of the American Mathematical Society, 367(2), 813-834.

\bibitem{bea3}Baeumer, B., Kov\'{a}cs, M., Meerschaert, M., Schilling, R., \& Straka, P. (2016). Reflected spectrally negative stable processes and their governing equations. Transactions of the American Mathematical Society, 368(1), 227-248.

\bibitem{RundellJin2015}Jin, B., and Rundell, W. (2015). A tutorial on inverse problems for anomalous diffusion processes. Inverse Problems, 31(3), 035003.

\bibitem{Cheng2009}Cheng, J., Nakagawa, J., Yamamoto, M., and Yamazaki, T. (2009). Uniqueness in an inverse problem for a one-dimensional fractional
    diffusion equation. Inverse problems, 25(11), 115002.

\bibitem{fracJMAA2011}Sakamoto, K., and Yamamoto, M. (2011). Initial value/boundary value problems for fractional diffusion-wave equations and applications to some inverse problems. Journal of Mathematical Analysis and Applications, 382(1), 426-447.

\bibitem{Liu2010}Liu, J. J., and Yamamoto, M. (2010). A backward problem for the time-fractional diffusion equation. Applicable Analysis, 89(11), 1769-1788.

\bibitem{ZhangXu2011}Zhang, Y., and Xu, X. (2011). Inverse source problem for a fractional diffusion equation. Inverse problems, 27(3), 035010.

\bibitem{ZhengWei2010}Zheng, G. H., and Wei, T. (2010). Two regularization methods for solving a Riesz-Feller space-fractional backward diffusion problem. Inverse Problems, 26(11), 115017.

\bibitem{WangLiu2013}Wang, L., and Liu, J. (2013). Total variation regularization for a backward time-fractional diffusion problem. Inverse problems, 29(11), 115013.

\bibitem{Blomgren2000}Blomgren, P., Chan, T. F., Mulet, P., Vese, L., and Wan, W. L. (2000). Variational PDE models and methods for image processing. Chapman and hall crc research notes in mathematics, 43-68.

\bibitem{LiLiPi2010}Li, F., Li, Z., and Pi, L. (2010). Variable exponent functionals in image restoration. Applied Mathematics and Computation, 216(3), 870-882.

\bibitem{Harjulehto2013}Harjulehto, P., H\"{a}st\"{o}, P., Latvala, V., and Toivanen, O. (2013). Critical variable exponent functionals in image restoration. Applied Mathematics Letters, 26(1), 56-60.

\bibitem{Harjulehto2008}Harjulehto, P., H\"{a}st\"{o}, P., and Latvala, V. (2008). Minimizers of the variable exponent, non-uniformly convex Dirichlet energy. Journal de math¨¦matiques pures et appliqu¨¦es, 89(2), 174-197.

\bibitem{wuchi}Tiirola, J. (2014). Image decompositions using spaces of variable smoothness and integrability. SIAM Journal on Imaging Sciences, 7(3), 1558-1587.

\bibitem{BV1994}Acar, R., and Vogel, C. R. (1994). Analysis of bounded variation penalty methods for ill-posed problems. Inverse problems, 10(6), 1217.

\bibitem{OsherBurger2005}Osher, S., Burger, M., Goldfarb, D., Xu, J., and Yin, W. (2005). An iterative regularization method for total variation-based image restoration. Multiscale Modeling and Simulation, 4(2), 460-489.

\bibitem{var2009}Bollt, E. M., Chartrand, R., Esedo$\bar{\text{g}}$lu, S., Schultz, P., and Vixie, K. R. (2009). Graduated adaptive image denoising: local compromise between total variation and isotropic diffusion. Advances in Computational Mathematics, 31(1-3), 61-85.

\bibitem{fracope}Carracedo, C. M., and Alix, M. S. (2001). The theory of fractional powers of operators. Amsterdam: Elsevier.

\bibitem{Haase2006}Haase, M. (2006). The functional calculus for sectorial operators (Vol. 169). Springer Science and Business Media.

\bibitem{wang2006}Wang, M. X. (2006). Operator semigroup and evolutionary equations (in Chinese). Science Press.

\bibitem{Jia2014}Jia, J., Peng, J., and Li, K. (2014). Well-posedness of abstract distributed-order fractional-order fractional diffusion equations. Communications on Pure and Applied Analysis, 13(2).

\bibitem{Bajlekova2001}Bajlekova, E. G. (2001). Fractional evolution equations in Banach spaces (Doctoral dissertation, University Press Facilities, Eindhoven University of Technology).

\bibitem{evans}Evans, L. C. (1998). Partial differential equations. Graduate Studies in Mathematics, 19.

\bibitem{Wright1940}Wright, E. M. (1940). The generalized Bessel function of order greater than one. The Quarterly Journal of Mathematics, (1),
    36-48.

\bibitem{Mainardi1996}Mainardi, F. (1996). Fractional relaxation-oscillation and fractional diffusion-wave phenomena. Chaos, Solitons and Fractals, 7(9), 1461-1477.

\bibitem{Mainardi1994}Mainardi, F., and Tomirotti, M. (1994). On a special function arising in the time fractional diffusion-wave equation. Transform Methods and Special Functions, Sofia, 171.

\bibitem{Jacob1}Jacob, N. (2005). Pseudo differential operators and markov processes: Fourier analysis and semigroups (Vol. 1). Imperial College Press.

\bibitem{Arendt2011}Arendt, W., Batty, C. J., Hieber, M., and Neubrander, F. (2011). Vector-valued Laplace transforms and Cauchy problems (Vol. 96). Springer Science and Business Media.

\bibitem{JinBangti2015}Jin, B., Lazarov, R., Sheen, D., and Zhou, Z. (2015). Error Estimates for Approximations of Distributed Order Time Fractional Diffusion with Nonsmooth Data. arXiv preprint arXiv:1504.01529.

\bibitem{GoldsteinOsher2009}Goldstein, T., and Osher, S. (2009). The split Bregman method for L1-regularized problems. SIAM Journal on Imaging Sciences, 2(2), 323-343.

\bibitem{BuiThanhGhattas2015}Bui-Thanh, T., and Ghattas, O. (2015). A scalable algorithm for map estimators in bayesian inverse problems with besov priors. Inverse Problems and Imaging, 9(1), 27-53.

\bibitem{PodlubnyMatlab}Podlubny, I., Matlab program for computing Mittag-Leffler fuction $E_{\alpha,\beta}(\cdot)$, http://www.mathworks.com/matlabcentral/fileexchange/8738-mittag-leffler-function

\bibitem{Podlubny1999}Podlubny, I. (1999). Fractional Differential Equations: An Introduction to Fractional Derivatives,
Fractional Differential Equations, to Methods of Their Solution and Some of Their Applications, vol. 198 of. Mathematics in Science and Engineering.

\bibitem{WangGaoZhou2013}Wang, Z., Gao, J., Zhou, Q., Li, K., and Peng, J. (2013). A new extension of seismic instantaneous frequency using a fractional time derivative.
Journal of Applied Geophysics, 98, 176-181.

\end{thebibliography}
\end{document}